\numberwithin{equation}{section}
\theoremstyle{plain}
\newtheorem{thm}{Theorem}[section]
\newtheorem*{thm*}{Theorem}
\newtheorem{lem}[thm]{Lemma}
\newtheorem*{lem*}{Lemma}
\newtheorem{prop}[thm]{Proposition}
\newtheorem*{prop*}{Proposition}
\newtheorem{cor}[thm]{Corollary}
\newtheorem*{cor*}{Corollary}
\theoremstyle{definition}
\newtheorem*{ex*}{Example}
\newtheorem{dfn}[thm]{Definition}
\newtheorem*{dfn*}{Definition}
\theoremstyle{remark}
\newtheorem*{rem*}{Remark}
\newcommand{\BC}{\mathbb{C}}
\newcommand{\BN}{\mathbb{N}}
\newcommand{\BQ}{\mathbb{Q}}
\newcommand{\BR}{\mathbb{R}}
\newcommand{\BZ}{\mathbb{Z}}
\newcommand{\Hom}{{\rm Hom}}
\newcommand{\Ker}{{\rm Ker}}
\newcommand{\GL}{{\rm GL}}
\newcommand{\longtwoheadrightarrow}{\begin{tikzcd}[cramped,sep=scriptsize,ampersand replacement=\&]{}\arrow[r, two heads]\&{}\end{tikzcd}}
	\title{Classification of distinguished representations of GL(2)}
	\author{Yuki Matsumoto}
    \address{Department of Mathematics\\
    Graduate School of Science\\
    The University of Osaka\\
    Toyonaka, Osaka 560-0043\\
    Japan}
    \email{u005785d@ecs.osaka-u.ac.jp}
\begin{document}

\maketitle

\begin{abstract}
    Let $F$ be a non-archimedean local field with odd residual characteristic, and let $H$ be a maximal torus of $\GL_2(F)$. In this paper, we will classify the irreducible $H$-distinguished representations of $\GL_2(F)$ by using Casselman's criteria for $p$-adic symmetric spaces developed by Kato-Takano.
\end{abstract}

\section{Introduction}

Let $F$ be a non-archimedean local field with odd residual characteristic. Let $\bf G$ be a connected reductive group over $F$ with an $F$-involution $\sigma$, and $\bf H$ be the subgroup of all $\sigma$-fixed points of $\bf G$. For any $F$-subgroup $\bf R$ of $\bf G$, the group ${\bf R}(F)$ of $F$-points of $\bf R$ is denoted by $R$.

An admissible representation $(\pi,V)$ of $G$ is said to be $H$-distinguished if there exists a nonzero $H$-invariant linear form on $V$. In \cite{MR2428854}, Kato and Takano defined the $H$-matrix coefficient of an $H$-distinguished representation $(\pi,V)$ as follows: Let $v\in V$ and $\lambda \in \Hom_H(\pi,{\bf 1})$. The $H$-matrix coefficient $\varphi_{\lambda,v}$  is a function on symmetric space $G/H$ defined by
\begin{equation*}
	\varphi_{\lambda,v}(g)=\langle \lambda,\pi(g^{-1})v \rangle
\end{equation*}
for $g\in G$. Furthermore, the $H$-relatively cuspidal, square integrable and tempered representations were defined by Kato, Takano and Takeda (\cite{MR2428854, MR2566307, MR3889766}) as follows.

\begin{dfn} Let $(\pi,V)$ be an $H$-distinguished representation of $G$, and $Z_G$ be the center of $G$.
    \begin{enumerate}[leftmargin=*]
	    \item A representation $(\pi,V)$ is called $H$-relatively cuspidal if the $H$-matrix coefficients $\varphi_{\lambda,v}$ are compactly supported modulo $Z_GH$ for all $v\in V$ and $\lambda\in\Hom_H(\pi,{\bf 1})$.
    \end{enumerate}

    \noindent Assume $(\pi,V)$ admits a unitary central character.
    \begin{enumerate}[leftmargin=*, start=2]
	    \item A representation $(\pi,V)$ is called $H$-relatively square integrable if the $H$-matrix coefficients $\varphi_{\lambda,v}$ are square integrable modulo $Z_GH$ for all $v\in V$ and $\lambda\in\Hom_H(\pi,{\bf 1})$.

        \item A representation $(\pi,V)$ is called $H$-relatively tempered if the $H$-matrix coefficients $\varphi_{\lambda,v}$ are in $L^{2+\varepsilon}(G/Z_GH)$ for all $\varepsilon >0$, $v\in V$ and $\lambda\in\Hom_H(\pi,{\bf 1})$.
    \end{enumerate}
\end{dfn}

In this paper, we classify the irreducible $H$-distinguished representations of $G$ in the case ${\bf G} =\GL_2$ and $\sigma$ is a nontrivial inner involution of ${\bf G}$. Note that in this case, ${\bf H}$ is $G$-conjugate to
\begin{equation*}
	\left\{
	\begin{pmatrix}
		a & b \\ \tau b & a
	\end{pmatrix}
	\bigg |\ a^2-\tau b^2\neq 0\right\} \quad \text{for some}\ \tau\in F^{\times},
\end{equation*}
so that ${\bf H}$ is a maximal torus of ${\bf G}$. It is well known that the criterion proved by Waldspurger and Tunnell characterizes an infinite-dimensional irreducible admissible representation as being $H$-distinguished. We state the main results of this paper.

\begin{thm}\label{thm:classification_in_the_split_case} Let ${\bf G}$ be $\GL_2$, ${\bf H}$ be a maximal $F$-split torus of $G$, and $(\pi,V)$ be an irreducible $H$-distinguished representation of $G$.
\begin{enumerate}[leftmargin=*]
	\item A representation $(\pi,V)$ is $H$-relatively cuspidal if and only if it is either a supercuspidal or the untwisted Steinberg representation.

	\item A representation $(\pi,V)$ is $H$-relatively square integrable if and only if it is square integrable.

	\item A representation $(\pi,V)$ is $H$-relatively tempered if and only if it is tempered.
\end{enumerate}
\end{thm}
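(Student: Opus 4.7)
The plan is to apply the relative Casselman criterion of Kato--Takano \cite{MR2428854,MR2566307}: an irreducible $H$-distinguished $\pi$ is $H$-relatively square integrable (resp.\ tempered) iff the ``relative exponents'' $|\chi|_{A_M^-}$ of the normalized Jacquet modules $r_P(\pi)$ along all proper $\sigma$-split parabolics $P = MN$ satisfy a strict (resp.\ non-strict) decay inequality, while $H$-relative cuspidality requires the stronger vanishing of the Jacquet map $\lambda \mapsto r_P(\lambda)$ from $\Hom_H(\pi,\mathbf{1})$ to $\Hom_{M\cap H}\bigl(r_P(\pi),\delta_P^{1/2}\delta_{P\cap H}^{-1}\bigr)$.

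For coordinates, I would take (after a $G$-conjugation) $H = \left\{\begin{pmatrix}a & b \\ b & a\end{pmatrix}\right\}$ (the case $\tau = 1$), so that $\sigma = \mathrm{Ad}(w)$ for the Weyl element $w = \begin{pmatrix}0 & 1 \\ 1 & 0\end{pmatrix} \in H$. Then the standard upper-triangular Borel $B = TN$ is $\sigma$-split (since $\sigma(B) = \bar B$), the diagonal Levi $T$ is $\sigma$-stable with $\sigma$ swapping the two entries, and $A_T^- = \{\mathrm{diag}(a,a^{-1}) : a \in F^\times\} \cong F^\times$, $T \cap H = Z_G$, $H \cap N = 1$. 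Since $w \in H$ conjugates $B$ to $\bar B$, $B$ is (up to $H$-conjugacy) the unique proper $\sigma$-split parabolic, so every Kato--Takano inequality reduces to a single real condition on each exponent.

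The argument proceeds case by case over the types of irreducible $\pi$ (all of which have trivial central character since $Z_G \subset H$). For $\pi$ supercuspidal, $r_B(\pi) = 0$ and all three criteria hold vacuously. For an irreducible principal series $I(\chi_1,\chi_2)$ with $\chi_1\chi_2 = 1$, the Jacquet module has two Weyl-symmetric exponents whose $A_T^-$-restrictions are mutual inverses; the strict decay condition cannot hold for both, ruling out rel.\ cuspidality and rel.\ square integrability, whereas the non-strict condition holds for both iff $|\chi_1/\chi_2| = 1$, i.e.\ $\pi$ is tempered, giving (3). For a twist $\chi\mathrm{St}$ of the Steinberg (with $\chi^2 = 1$), the unique Jacquet exponent's $A_T^-$-restriction strictly decays, so the analytic inequalities for (2) and (3) are automatically satisfied -- matching the fact that $\chi\mathrm{St}$ is square integrable and tempered.

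The delicate part is (1) for the Steinberg twists: one must show that the Jacquet-vanishing condition characterizing rel.\ cuspidality holds only for the untwisted $\mathrm{St}$, not for $\chi\mathrm{St}$ with $\chi \neq 1$ quadratic. I would handle this by a Mackey--Bruhat analysis of $H \backslash G / B$, which decomposes into one open $H$-orbit (stabilizer $Z_G$) together with two closed orbits at the fixed points $[1:\pm 1] \in \mathbb{P}^1$. Computing the $H$-invariant functionals arising from each orbit on $I(\chi\nu^{-1/2},\chi\nu^{1/2})$, tracing them through the short exact sequence $0 \to \chi\circ\det \to I(\chi\nu^{-1/2},\chi\nu^{1/2}) \to \chi\mathrm{St} \to 0$, and computing their images under $r_B$, one sees that $r_B(\lambda)$ vanishes for every $\lambda \in \Hom_H(\chi\mathrm{St},\mathbf{1})$ precisely when $\chi = 1$. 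Collating the three cases then yields the equivalences in the theorem.
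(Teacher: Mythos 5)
Your overall strategy (apply the Kato--Takano relative Casselman criteria along the standard Borel, which is the unique proper $\sigma$-split parabolic up to $H$-conjugacy, and use a Bruhat analysis of $H\backslash G/B$) is the same skeleton the paper follows, and your identification of the setup ($w\in H$, $T\cap H = Z_G$, $H\cap N = 1$, one open $H$-orbit and two closed ones on $B\backslash G$) matches the paper's. However, there is a genuine gap in the middle step, and you have also located the ``delicate part'' in the wrong place.

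The gap is in your treatment of the irreducible principal series $I(\chi_1,\chi_2)$ with $\chi_1\chi_2 = 1$ and $|\chi_1|\neq 1$. You assert that ``the strict decay condition cannot hold for both'' Jacquet exponents and conclude non-rel.-square-integrability (and similarly for temperedness). But the Kato--Takano criterion does not impose inequalities on \emph{all} exponents of $\pi_P$; it imposes them only on the $(H,\lambda)$-relative exponents, i.e.\ the $\chi\in\widehat{S_P}$ for which $r_P(\lambda)$ is nonvanishing on the generalized $\chi$-eigenspace $V_{P,\chi}$. Since $\dim\Hom_H(\pi,\mathbf{1})=1$, there is one $\lambda$ to check, and it is perfectly possible a priori that $r_P(\lambda)$ kills the ``bad'' eigenspace carrying $\rho^w$, in which case the only relative exponent would be $\rho$, which \emph{does} decay when $|\chi_1|<1$, and the criterion would be satisfied. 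So your deduction does not close. Indeed, the phenomenon you would need to rule out actually occurs in this very paper: for the untwisted Steinberg the sole Jacquet exponent lies strictly inside the positive cone, yet $r_P(\lambda)=0$, so the set of relative exponents is empty and St is $H$-relatively cuspidal. Thus ``Jacquet exponent present'' $\not\Rightarrow$ ``relative exponent present.''

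Closing this gap is exactly the technical heart of the paper's proof of Case~1. There, one writes down the unique (up to scalar) $\lambda$ explicitly as integration over the open orbit $P\backslash PH$ against the continuous extension $\Theta_\alpha$ of $\delta_P^{-1/2}\rho^{-1}$, picks a test function $f_2$ supported on the Bruhat cell $Pw'U_{\mathcal{O}_F}$ whose image spans the $\rho^w$-eigenline in $V_P$, and computes $\langle\lambda,\pi(s_n)f_2\rangle = q_F^{4n\alpha}\neq 0$ for $n\gg 0$. Combined with Proposition~\ref{prop:r_P}, this forces $\rho^w\in\mathcal{E}xp_{S_P}(\pi_P,\lambda)$, and the decay conditions then fail as you want. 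Your sketch skips this entirely; without it, parts (2) and (3) of the theorem do not follow in the ``only if'' direction for non-tempered principal series. (The same verification that $r_P(\lambda)\neq 0$ is also silently needed in the unitary case for (2), which the paper handles with the same construction.) By contrast, your ``delicate part'' --- distinguishing the untwisted Steinberg from the quadratic twists in (1) --- is indeed in the paper (Cases~3 and~4), but there Case~3 is deduced quickly from Case~1 via the surjection $\Hom_H(V/W,\mathbb{C})\twoheadrightarrow\Hom_H(V,\mathbb{C})$, and Case~4 is the shorter of the two explicit computations. So the difficulty is distributed differently than you anticipated: the explicit asymptotics on the open orbit in Case~1 is the main work, not the closed-orbit analysis in Case~4.
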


\begin{thm}\label{thm:classification_in_the_non-split_case} Let ${\bf G}$ be $\GL_2$, ${\bf H}$ be a maximal non $F$-split torus of $G$, and $(\pi,V)$ be an irreducible $H$-distinguished representation of $G$.
    \begin{enumerate}[leftmargin=*]
	    \item A representation $(\pi,V)$ is $H$-relatively cuspidal if and only if it is supercuspidal.

	    \item A representation $(\pi,V)$ is $H$-relatively square integrable if and only if it is square integrable.

	    \item A representation $(\pi,V)$ is $H$-relatively tempered if and only if it is tempered.
    \end{enumerate}
\end{thm}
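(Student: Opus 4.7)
The plan is to apply the Casselman-type criterion of Kato--Takano, which characterizes $H$-relatively cuspidal, square integrable, and tempered representations through the exponents of Jacquet modules along $\sigma$-split parabolic subgroups of $G$. The first task is to identify these parabolics in our non-split setting. Up to $G$-conjugation we may take $\sigma = \mathrm{Ad}(\epsilon)$ with $\epsilon = \left(\begin{smallmatrix} 0 & 1 \\ \tau & 0 \end{smallmatrix}\right)$ for some non-square $\tau \in F^\times$, so that $H$ takes the form given in the introduction. A direct computation shows that $\sigma$ interchanges the upper Borel $B$ and the lower Borel $\bar B$, hence $B$ and $\bar B$ are the only proper $\sigma$-split parabolic subgroups, with common Levi $T$ (the diagonal torus), on which $\sigma$ acts by swapping the two entries. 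In particular, the $\sigma$-split component is
\begin{equation*}
A_T^- = \{\mathrm{diag}(a, a^{-1}) : a \in F^\times\},
\end{equation*}
a one-dimensional $F$-split torus that captures the same asymptotic direction as the maximal $F$-split torus of $G/Z_G$.

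The crux is that $A_T^-$ governs exactly the same asymptotics as the torus used in the classical Casselman criterion for absolute square integrability and temperedness of representations of $G$ with unitary central character. Hence the Kato--Takano conditions on $A_T^-$ match the classical Casselman conditions, and statements (2) and (3) will follow by a direct comparison, once the $(T, T \cap H)$-relevance of the exponents appearing in $\pi_B$ is established. Since $T \cap H = Z_G$, this relevance condition reduces to triviality on the center, which is automatic for $H$-distinguished $\pi$ because $Z_G \subset H$ forces a trivial central character.

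For (1) I proceed by case analysis on the type of $\pi$. If $\pi$ is supercuspidal, then $\pi_B = 0$ and the Kato--Takano criterion is vacuously satisfied. If $\pi$ is a distinguished twist of the Steinberg representation, then $\pi_B$ is one-dimensional, and a short calculation shows that its unique exponent is non-zero and strictly dominant on $A_T^-$; hence $\pi$ fails to be $H$-relatively cuspidal, but its square integrability condition still holds. If $\pi$ is an irreducible principal series, $\pi_B$ has two exponents, each $(T, T \cap H)$-relevant, and the boundedness condition on $A_T^-$ translates precisely into absolute temperedness of $\pi$.

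The main technical obstacle will be a uniform verification that each exponent of $\pi_B$ is $(T, T \cap H)$-relevant in the strong Kato--Takano sense, that is, actually contributes a non-zero $H$-equivariant linear form through the associated boundary-Jacquet pairing rather than being a merely formal weight. Once this step is in place, the three parts of Theorem \ref{thm:classification_in_the_non-split_case} reduce, via the coincidence of $A_T^-$ with the maximal split torus of $G/Z_G$ up to isogeny, to the classical Casselman criterion applied to $\pi$.
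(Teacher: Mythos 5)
Your proposal stalls on precisely the point where the split and non-split cases diverge, and the way it is phrased would also prove a false statement in the split setting. You write that for a distinguished twist of Steinberg, ``a short calculation shows that its unique exponent is non-zero and strictly dominant on $A_T^-$; hence $\pi$ fails to be $H$-relatively cuspidal.'' That implication is not valid: the Kato--Takano relative-cuspidality criterion (Theorem \ref{thm:criterion_of_relatively_cusp}) says $\pi$ is $(H,\lambda)$-relatively cuspidal if and only if $r_P(\lambda)=0$, and a dominant exponent of $\pi_P$ is only relevant if it is a \emph{relative} exponent, i.e.\ if $r_P(\lambda)$ is nonzero on the corresponding generalized eigenspace. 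In Theorem \ref{thm:classification_in_the_split_case}(1) the untwisted Steinberg \emph{is} $H$-relatively cuspidal despite having exactly the same one-dimensional Jacquet module with the same nonzero exponent; there the paper shows $r_P(\lambda)=0$ by an explicit computation with $\lambda(f)=f(q)-f(wq)$. So your reduction to ``the classical Casselman criterion applied to $\pi$'' cannot be correct without an argument that rules out $r_P(\lambda)=0$, and that argument is the whole content of the proof.

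The paper supplies the missing argument by exploiting the orbit structure of the non-split case: there is a single open $(P,H)$-double coset, in fact $G=PH$. Via the isomorphism \eqref{eq:iso} this forces $\dim\Hom_H(\mathrm{Ind}_P^G\rho,\BC)=1$, so that the $H$-period on $\mathrm{Ind}_P^G\rho$ is necessarily the open-orbit integral, and for the distinguished Steinberg twists the exact sequence \eqref{eq:exact_seq_H-duals} identifies $\Hom_H(V/W,\BC)$ with $\Hom_H(V,\BC)$. One then re-uses the $\Theta_\alpha$ computation from Case 1 of Section \ref{sec:main_result_in_the_split} (which shows $r_P$ of that open-orbit functional is nonzero, because the ``wrong-side'' exponent $\rho^w$ actually appears) to conclude that no distinguished Steinberg twist is relatively cuspidal. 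The same double-coset structure also decides \emph{which} Steinberg twists are $H$-distinguished --- exactly those quadratic $\mu$ not corresponding to $F(\sqrt\tau)$ and not trivial --- a step your proposal simply assumes away by speaking of ``a distinguished twist'' without characterizing them. You correctly flag the ``strong Kato--Takano relevance'' verification as the technical obstacle, but the proposal does not carry it out, and the split-case counterexample shows that it cannot be handled by a generic appeal to the coincidence of $A_T^-$ with the split torus of $G/Z_G$.
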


The structure of this paper is as follows: In Section \ref{sec:structure_of_G}, we will recall some facts about $\sigma$-split parabolic subgroups and the real parts of characters. In section \ref{sec:H-dist_rep}, we will recall some classes of $H$-distinguished representations and theorems that characterize them. In section \ref{sec:induction_from_split_parabolic_subgp}, we will consider induced representations from split parabolic subgroups. In section \ref{sec:rep_GL_2}, we will recall the representation theory of $\GL_2(F)$. In Sections \ref{sec:main_result_in_the_split} and \ref{sec:main_result_in_the_non-split}, we will prove our main result.

\section*{acknowledgement}
This work is a revised version of the master's thesis of the author. The problem studied in this paper was suggested by the author's adviser Shuichiro Takeda. The author would like to thank him for his guidance and support throughout the work.

\section*{Notation and assumptions}\label{sec:notation}
Throughout, $F$ is a non-archimedean local field with odd residual characteristic. We let $\mathcal{O}_F,\varpi_F,q_F$ and $|\cdot|_F$ be the ring of integers, a uniformizer, the number of elements in the residue field and the norm of $F$, respectively.

Let $\bf G$ be a connected reductive group over $F$ with an $F$-involution $\sigma$, and $\bf H$ be the subgroup of all $\sigma$-fixed points of $\bf G$. For an $F$-subgroup $\bf R$ of $\bf G$, the group ${\bf R}(F)$ of $F$-points of $\bf R$ is denoted by $R$. If $P$ is a parabolic subgroup of $G$, we denote the modulus character of $P$ by $\delta_P$. Also, if $(\pi,V)$ is an admissible representation of $G$, then we denote by $(\pi_P,V_P)$ the normalized Jacquet module of $(\pi,V)$ along $P$.

\section{$\sigma$-split parabolic subgroups and real parts of characters}\label{sec:structure_of_G}
In this section, we will recall some facts about $\sigma$-split parabolic subgroups and the real parts of characters. For details, see \cite{MR2428854} and \cite{MR3889766}.

\subsection{Root systems}\label{subsec:root_systems}
Let $\bf P$ be a parabolic subgroup of $\bf G$ with Levi decomposition $\bf P=MU$, where $\bf M$ is the Levi factor and $\bf U$ is the unipotent radical. Then $\bf P$ is said to be $\sigma$-split if ${\bf P}\cap \sigma({\bf P})=\bf M$.

Let $\bf S$ be an $F$-split torus of $\bf G$.\ Then $\bf S$ is said to be $\sigma$-split if $\sigma(s)=s^{-1}$ for all $s\in\bf S$. Let ${\bf S}_0$ be a maximal $\sigma$-split torus. Furthermore, let $\bf A$ be a maximal $F$-split torus containing ${\bf S}_0$. Both $\bf A$ and ${\bf S}_0$ are $\sigma$-stable, and hence $\sigma$ naturally acts on the groups of rational characters
\begin{equation*}
	X^{\ast}({\bf A})\coloneqq \Hom({\bf A},\mathbb{G}_m),\quad X^{\ast}({\bf S}_0)\coloneqq \Hom({\bf S}_0,\mathbb{G}_m)
\end{equation*}
of the respective tori.

Define
\begin{equation*}
	\mathfrak{a}^{\ast}\coloneqq X^{\ast}({\bf A})\otimes \BR,\quad \mathfrak{s}^{\ast}\coloneqq X^{\ast}({\bf S}_0)\otimes \BR.
\end{equation*}
In this paper, we often write $\bar{\nu}=p(\nu)$ for $\nu\in \mathfrak{a}^{\ast}$ with respect to the projecton
\begin{equation*}
	p:\mathfrak{a}^{\ast}\to \mathfrak{s}^{\ast}.
\end{equation*}

Let $\Phi=\Phi(\bf G,A)\subseteq X^{\ast}({\bf A})$ be the set of roots of $\bf (G,A)$. We choose a set $\Delta$ of simple roots such that the corresponding ordering satisfies the property
\begin{equation*}
	\alpha >0\quad {\rm and}\quad \sigma(\alpha)\neq \alpha\quad \Longrightarrow\quad \sigma(\alpha)<0.
\end{equation*}
We define
\begin{equation*}
	\Delta^{\sigma}\coloneqq \{\alpha\in\Delta\ |\ \sigma(\alpha)=\alpha\}\quad {\rm and}\quad \overline{\Delta}\coloneqq p(\Delta)\smallsetminus \{0\}.
\end{equation*}

\subsection{$\sigma$-split parabolic subsets}
Recall that each standard parabolic subgroup corresponds to a subset of $\Delta$. Therefore, we call a subset of $\Delta$  which corresponds to a $\sigma$-split parabolic subgroup a $\sigma$-split parabolic subset. Moreover, it is known to correspond, in turn, to a subset of $\overline{\Delta}$. Through this correspondence, $\Delta$ corresponds to a maximal $\sigma$-split parabolic subgroup $\bf G$, while $\Delta^{\sigma} $ corresponds to a minimal $\sigma$-split parabolic subgroup.

We call the $\sigma$-split parabolic subgroup that corresponds to a $\sigma$-split parabolic subset $I$ a $\Delta$-standard $\sigma$-split parabolic subgroup. When $\Delta$ is understood from the context, we simply call it the $\Delta$-standard $\sigma$-split parabolic subgroup.

Recall that any parabolic subgroup can be made standard by choosing an appropriate simple roots. The same is true for $\sigma$-split parabolic subgroups. Namely

\begin{lem}Any $\sigma$-split parabolic subgroup can be made standard by choosing an appropriate simple roots.\end{lem}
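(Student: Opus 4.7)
Let $\bf P=MU$ be a $\sigma$-split parabolic. The plan is to produce (after possibly $H$-conjugating $\bf S_0$) a maximal $F$-split torus $\bf A$ with $\bf S_0 \subseteq \bf A \subseteq \bf M$, together with an ordering on $\Phi(\bf G,\bf A)$ whose simple roots $\Delta$ satisfy the compatibility condition and make $\bf P$ standard. Since $\bf P\cap\sigma(\bf P)=\bf M$, the Levi $\bf M$ is $\sigma$-stable, so $Z(\bf M)^{\circ}$ is $\sigma$-stable and contains a maximal $\sigma$-split subtorus $\bf S_P$. A standard structure theorem for $\sigma$-split parabolics (as used in \cite{MR2428854}) guarantees that $\bf S_P$ is in fact a maximal $\sigma$-split torus of $\bf G$, and by $H$-conjugacy of such tori we may assume $\bf S_0=\bf S_P$. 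Because $\bf M$ is a Levi, it contains a maximal $F$-split torus of $\bf G$, which we may choose to contain $\bf S_0$; call it $\bf A$.

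Next, since $\sigma$ acts as $-1$ on $\bf S_0$, one checks $p\circ\sigma=-p$ on $\mathfrak{a}^{\ast}$. The restricted roots of $\bf U$ and those of the opposite radical $\sigma(\bf U)$ are negatives of one another, so we pick a regular $\bar v\in\mathfrak{s}^{\ast}$ with $\langle\bar v,\bar\alpha\rangle>0$ for every restricted root $\bar\alpha$ of $\bf U$, together with any regular $w$ in the $\sigma$-fixed subspace $(\mathfrak{a}^{\ast})^{\sigma}$. Declare $\alpha>0$ lexicographically: either $\langle p(\alpha),\bar v\rangle>0$, or $p(\alpha)=0$ and $\langle \alpha,w\rangle>0$. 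Let $\Delta$ be the resulting set of simple roots.

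Finally we verify the two required properties. If $\alpha>0$ and $\sigma(\alpha)\neq\alpha$ then $p(\alpha)\neq 0$, hence $\langle p(\alpha),\bar v\rangle>0$ and $\langle p(\sigma(\alpha)),\bar v\rangle=-\langle p(\alpha),\bar v\rangle<0$, so $\sigma(\alpha)<0$. For $\bf P$ to be $\Delta$-standard we check $\Phi^{+}\subseteq\Phi(\bf P)$: a positive $\alpha$ with $p(\alpha)=0$ is $\sigma$-fixed, hence trivial on $\bf S_0\supseteq\bf S_P$, so $\alpha\in\Phi(\bf M)$; a positive $\alpha$ with $p(\alpha)\neq 0$ has $\bar\alpha$ in exactly one of the disjoint pieces $\bar\Phi(\bf M)$, $\bar\Phi(\bf U)$, $-\bar\Phi(\bf U)$, and the choice of $\bar v$ rules out $\bar\alpha\in-\bar\Phi(\bf U)$, placing $\alpha$ in $\Phi(\bf M)\cup\Phi(\bf U)=\Phi(\bf P)$. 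The main obstacle is the structural input in the first paragraph: once the Levi of any $\sigma$-split parabolic is known to contain a maximal $\sigma$-split torus of $\bf G$, the lexicographic construction of $\Delta$ is essentially forced.
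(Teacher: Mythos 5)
The paper does not give an argument for this lemma---it simply cites Kato--Takano---so your attempt to supply one is worthwhile, but there is a genuine gap in the first paragraph. You assert that the maximal $\sigma$-split subtorus ${\bf S}_{\bf P}$ of $Z({\bf M})^{\circ}$ is a maximal $\sigma$-split torus of ${\bf G}$. This is false as soon as ${\bf P}$ is not a minimal $\sigma$-split parabolic: in the paper's own notation ${\bf S}_{\bf P}={\bf S}_0\cap {\bf A}_{\bf P}$ is a \emph{proper} subtorus of the maximal $\sigma$-split torus ${\bf S}_0$ whenever ${\bf P}\supsetneq {\bf P}_{\Delta^{\sigma}}$, and when ${\bf P}={\bf G}$ it is just the $\sigma$-split part of $Z({\bf G})^{\circ}$, which is nowhere near maximal in general. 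The correct structural input is that the \emph{Levi} ${\bf M}={\bf P}\cap\sigma({\bf P})$---not its center---contains a maximal $\sigma$-split torus of ${\bf G}$; this is what Helminck--Wang prove and what Kato--Takano actually use. With that substitution your construction goes through: take ${\bf S}_0$ to be a maximal $\sigma$-split torus of ${\bf G}$ lying in ${\bf M}$, extend it to a maximal $F$-split torus ${\bf A}\subseteq{\bf M}$, and then your lexicographic ordering built from $\bar v$ and $w$ does yield a basis $\Delta$ with the $\sigma$-compatibility property, and the final check that $\Phi^{+}\subseteq\Phi({\bf P})$ is correct, so ${\bf P}$ becomes $\Delta$-standard.

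A secondary point: the appeal to ``$H$-conjugacy of such tori'' to arrange ${\bf S}_0={\bf S}_{\bf P}$ is unjustified over a $p$-adic field, where maximal $\sigma$-split tori need not lie in a single $H$-conjugacy class (this is precisely the subtlety behind the hypothesis $({\bf M}_0{\bf H})(F)=M_0H$ in Lemma \ref{lem:conjugate_of_sigma-split}). Fortunately the step is also unnecessary: ``choosing an appropriate simple roots'' implicitly allows choosing the entire triple $({\bf S}_0,{\bf A},\Delta)$, so you can simply \emph{define} ${\bf S}_0$ to be the maximal $\sigma$-split torus you found inside ${\bf M}$, with no conjugation at all.
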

\begin{proof}See {\cite[p.11]{MR2428854}}.\end{proof}

Once $\Delta$ is fixed, any parabolic subgroup is conjugate to a standard one. However,  this does not generally hold for $\sigma$-split parabolic subgroups. Kato and Takano have shown the following.

\begin{lem}\label{lem:conjugate_of_sigma-split}Let ${\bf M}_0$ be the Levi factor of the minimum $\sigma$-split parabolic subgroup ${\bf P}_{\Delta^{\sigma}}$.

	\noindent(1)\ Any $\sigma$-split parabolic subgroup is of the form $\gamma^{-1}{\bf P}_I\gamma$ for some $\sigma$-split parabolic subset $I\subseteq \Delta$ and $\gamma\in({\bf M}_0{\bf H})(F)$.

	\noindent(2)\ If $({\bf M}_0{\bf H})(F)=M_0H$, then every $\sigma$-split parabolic subgroup is $H$-conjugate to a standard $\sigma$-split one.
\end{lem}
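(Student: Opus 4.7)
The plan is to follow a two-step conjugation strategy: first move a maximal $(\sigma,F)$-split torus of the given parabolic onto ${\bf S}_0$ using $H$-conjugacy, then move the resulting parabolic into standard form using a Weyl-group representative inside $M_0$.

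Let ${\bf P}={\bf MU}$ be a $\sigma$-split parabolic. Since ${\bf P}\cap\sigma({\bf P})={\bf M}$, the Levi ${\bf M}$ contains a nontrivial $\sigma$-split $F$-torus in the split component of its center, and I would enlarge this to a maximal $(\sigma,F)$-split $F$-torus ${\bf S}'$ of ${\bf G}$ contained in ${\bf M}$. By the Helminck--Wang conjugacy theorem for maximal $(\sigma,F)$-split tori (tracking rationality so that the conjugating element lives in $({\bf M}_0{\bf H})(F)$ rather than merely in ${\bf H}(\bar F)$), there exists $\gamma_1\in({\bf M}_0{\bf H})(F)$ with $\gamma_1{\bf S}'\gamma_1^{-1}={\bf S}_0$. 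Then $\gamma_1{\bf P}\gamma_1^{-1}$ is a $\sigma$-split parabolic whose Levi contains $Z_{\bf G}({\bf S}_0)={\bf M}_0$, and in particular contains ${\bf A}$.

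Next, the set of parabolic subgroups of ${\bf G}$ that contain ${\bf M}_0$ is parametrized by subsets of the relative root system $\overline{\Phi}$, and the Weyl group $W({\bf M}_0,{\bf A})$, which is represented by elements of $M_0$, acts on this set and carries any such parabolic into a $\Delta$-standard one. Hence I pick $\gamma_2\in M_0$ with $\gamma_2\gamma_1{\bf P}\gamma_1^{-1}\gamma_2^{-1}={\bf P}_I$ for an appropriate $\sigma$-split parabolic subset $I\subseteq\Delta$. Setting $\gamma:=\gamma_2\gamma_1\in({\bf M}_0{\bf H})(F)$ proves (1).

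For (2), the hypothesis lets me factor $\gamma=mh$ with $m\in M_0$ and $h\in H$. Because ${\bf M}_0\subseteq{\bf P}_I$, the element $m$ normalizes ${\bf P}_I$, so
\begin{equation*}
    h^{-1}{\bf P}_Ih=h^{-1}m^{-1}{\bf P}_I m h=\gamma^{-1}{\bf P}_I\gamma={\bf P},
\end{equation*}
exhibiting ${\bf P}$ as $H$-conjugate to the $\Delta$-standard $\sigma$-split parabolic ${\bf P}_I$. The main obstacle is the first conjugation step: one needs the refined form of Helminck--Wang that delivers $\gamma_1$ inside $({\bf M}_0{\bf H})(F)$, since it is precisely this rational conjugacy of maximal $(\sigma,F)$-split tori that prevents a clean $H$-conjugacy statement in general and forces the hypothesis in (2). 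The remaining ingredients---the description of parabolics containing ${\bf M}_0$ via $\overline{\Phi}$ and the Weyl-group normalization---are routine once the torus has been standardized.
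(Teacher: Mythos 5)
The paper does not contain its own argument for this lemma; it simply cites Kato--Takano \cite[Lemma~2.5]{MR2428854}, where the proof runs through the transitivity of $({\bf M}_0{\bf H})(F)$ on minimal $\sigma$-split parabolic subgroups (resp.\ on maximal $(\sigma,F)$-split tori) established by Helminck--Wang and Helminck--Helminck. Your first step and the argument for~(2) are in that spirit and are fine, but the second conjugation step has a genuine error.

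After conjugating by $\gamma_1$, the parabolic ${\bf P}'=\gamma_1{\bf P}\gamma_1^{-1}$ contains ${\bf M}_0=Z_{\bf G}({\bf S}_0)$ (and hence ${\bf A}$). You then propose to move ${\bf P}'$ to a $\Delta$-standard parabolic by conjugating with $\gamma_2\in M_0$ coming from $W({\bf M}_0,{\bf A})$. This cannot work: since ${\bf M}_0\subseteq{\bf P}'$ and a parabolic subgroup is its own normalizer, every element of $M_0$ normalizes ${\bf P}'$, so conjugation by $\gamma_2\in M_0$ leaves ${\bf P}'$ unchanged. Equivalently, $W({\bf M}_0,{\bf A})$ is generated by reflections in roots that restrict trivially to ${\bf S}_0$, so it acts trivially on the restricted root system $\overline{\Phi}$ and therefore cannot permute the $\sigma$-split parabolics containing ${\bf M}_0$, which are classified by parabolic subsets of $\overline{\Phi}$.

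The group you actually need for the second step is the restricted (``little'') Weyl group $W(\overline{\Phi})\cong N_{\bf G}({\bf S}_0)/Z_{\bf G}({\bf S}_0)$, and the substantive input---due to Helminck--Wang and used by Kato--Takano---is that its elements are represented by elements of $N_{\bf H}({\bf S}_0)$ (and, with the relevant rationality, in $({\bf M}_0{\bf H})(F)$). That places $\gamma_2$ in $H$ rather than in $M_0$, which is what makes $\gamma=\gamma_2\gamma_1$ land in $({\bf M}_0{\bf H})(F)$ and keeps part~(2) honest. Note also that the rationality subtlety you flag is not confined to the torus-conjugacy step; it equally concerns where the little-Weyl-group representatives live. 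As written, your argument silently assumes that ${\bf P}'$ is already $\Delta$-standard once it contains ${\bf M}_0$, which is false in general.
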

\begin{proof}
	See \cite[Lemma\ 2.5.\ p.11]{MR2428854}.
\end{proof}

\subsection{Positive cones}\label{sec:positive_cones}
For a subset $I\subseteq \Delta$, let ${\bf P}$ be the parabolic subgroup ${\bf P}_I$ with $\bf M_P$ as the Levi factor and $\bf U_P$ as the unipotent radical as usual. Let $\bf A_P$ be the split component of center of $\bf M_P$. Then we have
\begin{equation*}
	{\bf A_P}=\bigg (\bigcap_{\alpha\in I}\Ker \alpha \bigg)^{\circ}\quad {\rm and}\quad {\bf M_P}=Z_{\bf G}(\bf A_P).
\end{equation*}
Moreover, if $I$ is a $\sigma$-parabolic subset, let ${\bf S}_0$ be a maximal $\sigma$-split torus contained in $\bf P$. We set
\begin{equation}\label{eq:S_P}
	{\bf S_P}\coloneqq {\bf S}_0\cap {\bf A_P}.
\end{equation}

For the group of rational characters $X^{\ast}({\bf S_P})\coloneqq \Hom({\bf S_P},\mathbb{G}_m)$ of $\bf S_P$, define
\begin{equation*}
	\mathfrak{s}_{\bf P}^{\ast}\coloneqq X^{\ast}({\bf S_P})\otimes \BR.
\end{equation*}

Let $\rho_P:\mathfrak{s}^{\ast}\to\mathfrak{s}_{\bf P}^{\ast}$ be the projection on $\mathfrak{s}_{\bf P}^{\ast}$, and define
\begin{align*}
	+\mathfrak{s}_{\bf P}^{\ast}&\coloneqq \{\nu\in \mathfrak{s}_{\bf P}^{\ast}\ |\ \nu=\sum_{\alpha\in\Delta\smallsetminus I}c_{\alpha}\rho_{\bf P}(\alpha),\ c_{\alpha}>0\},\\
	\overline{+\mathfrak{s}_{\bf P}^{\ast}}&\coloneqq \{\nu\in \mathfrak{s}_{\bf P}^{\ast}\ |\ \nu=\sum_{\alpha\in\Delta\smallsetminus I}c_{\alpha}\rho_{\bf P}(\alpha),\ c_{\alpha}\ge0\}.
\end{align*}

\subsection{Real parts of characters}
In this subsection, we will recall the notion of the real part of a quasi-character of a reductive group. Throughout this subsection, we let ${\bf G}$ be an arbitrary connected reductive group over $F$.

We set
\begin{equation*}
	\mathfrak{a}_{\bf G}^{\ast}\coloneqq X^{\ast}({\bf G})\otimes \BR\quad{\rm and}\quad \mathfrak{a}_{{\bf G},\BC}^{\ast}\coloneqq X^{\ast}({\bf G})\otimes \BC.
\end{equation*}
Furthermore we set
\begin{equation*}
	\widehat{G}\coloneqq \Hom_{cont}(G,\BC^{\times})\quad{\rm and}\quad \widehat{G}^{un}\coloneqq \Hom_{cont}(G/G^1,\BC^{\times}),
\end{equation*}
where $G^1=\cap_{\chi\in X^{\ast}(\bf G)}\Ker |\chi|_F$. It is well known to that there is a surjective group homomorphism
\begin{equation*}
	\mathfrak{a}_{{\bf G},\BC}^{\ast} \longtwoheadrightarrow \widehat{G}^{un},\quad \sum_i\chi_i\otimes s_i\mapsto (g\mapsto \prod_i |\chi_i(g)|^{s_i}),
\end{equation*}
for $\chi_i\in X^{\ast}({\bf G})$ and $s_i\in \BC$. Moreover the kernel of this homomorphism is the lattice $L$ of the form $L=(2\pi i/q_F)L_0$ for some lattice $L_0\subseteq X^{\ast}({\bf G})\otimes \BQ$, so we have the isomorphism
\begin{equation*}
	\mathfrak{a}_{{\bf G},\BC}^{\ast}/L \stackrel{\sim}{\longrightarrow} \widehat{G}^{un}.
\end{equation*}
With the choice of $L$, we have a well-defined map
\begin{equation*}
	{\rm Re}:\mathfrak{a}_{{\bf G},\BC}^{\ast}/L \longrightarrow \mathfrak{a}_{\bf G}^{\ast},\quad \sum_i\chi_i\otimes s_i\mapsto \sum_i\chi_i\otimes {\rm Re}(s_i),
\end{equation*}
for $\chi_i\in X^{\ast}({\bf G})$ and $s_i\in \BC$. For every $\chi\in\widehat{G}$, we have $|\chi|\in \widehat{G}^{un}$. Hence we define ${\rm Re}(\chi)$ to be the image of the composite
\begin{equation*}
	\widehat{G} \stackrel{|\cdot|}{\longrightarrow} \widehat{G}^{un} \stackrel{\sim}{\longrightarrow} \mathfrak{a}_{{\bf G},\BC}^{\ast}/L \stackrel{{\rm Re}}{\longrightarrow} \mathfrak{a}_{\bf G}^{\ast},
\end{equation*}
and we call it the real part of $\chi$.

\section{$H$-distinguished representations}\label{sec:H-dist_rep}
In this section, we will recall some classes of $H$-distinguished representations and theorems that characterize them. For details, see \cite{MR2428854}, \cite{MR2566307} and \cite{MR3889766}.

\subsection{$H$-distinguished representations}
An admissible representation $(\pi,V)$ of $G$ is said to be $H$-distinguished if $\Hom_{H}(\pi,{\bf 1})\neq0$.

\begin{dfn}Let $(\pi,V)$ be an $H$-distinguished representation of $G$. For nonzero $\lambda\in\Hom_{H}(\pi,{\bf 1})$ and $v\in V$, we define $\varphi_{\lambda,v}: G/H\to \BC$ by
	\begin{equation*}
		\varphi_{\lambda,v}(g)=\langle \lambda,\pi(g^{-1})v \rangle
	\end{equation*}
for $g\in G$. We call $\varphi_{\lambda,v}$ an $H$-matrix coefficient or $(H,\lambda)$-matrix coefficient.
\end{dfn}

\subsection{The map $r_P$}
In this subsection, we will recall the map $r_P$ defined by Kato-Takano in \cite{MR2428854}. For details, see Section 5 of \cite{MR2428854}.

We fix $({\bf S}_0,{\bf A},\Delta)$ in subsection \ref{subsec:root_systems}. Let $I\subsetneq \Delta$ be a $\sigma$-split parabolic subset, and let $(\pi,V)$ be an $H$-distinguished representation of $G$. Then there exists a linear map
\begin{equation*}
	r_{P_I}:\Hom_H(\pi,{\bf 1})\longrightarrow\Hom_{M_I^{\sigma}}(\pi_{P_I},{\bf 1})
\end{equation*}
on the space of invariant linear forms such that if $v\in V$ is a ``canonical lift'' (see \cite[5\ p.19]{MR2428854}) of $\bar{v}\in V_{P_I}$, then
\begin{equation*}
	\langle r_{P_I}(\lambda),\bar{v} \rangle = \langle \lambda,v \rangle
\end{equation*}
for all $\lambda \in \Hom_H(\pi,{\bf 1})$.

The map $r_P$ satisfies the following formula.

\begin{prop}\label{prop:r_P} Let $\lambda\in\Hom_H(\pi,{\bf 1})$. For any $v\in V$, there exists a positive number $\varepsilon\le1$ such that
    \begin{equation*}
		\langle \lambda,\pi(s)v \rangle =\delta_{P_I}^{1/2}(s)\langle r_{P_I}(\lambda),\pi_{P_I}(s)j_{P_I}(v) \rangle
	\end{equation*}
for all $s\in S_{P_I}^{-}(\varepsilon)$, where $j_{P_I}$ is the canonical map $V\to V_{P_I}$ and $S_{P_I}^{-}(\varepsilon)$ is defined by
    \begin{equation*}
		S_{P_I}^{-}(\varepsilon)\coloneqq  \{s \in  S_{P_I}\ |\ | s^{\alpha}|_F\le \varepsilon,\ \forall \alpha\in \Delta\smallsetminus I \}.
	\end{equation*}
\end{prop}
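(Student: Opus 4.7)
The plan is to reduce the identity to the defining property of $r_{P_I}$ by showing that once $s$ is sufficiently contracting along the roots in $\Delta\smallsetminus I$, the vector $\pi(s)v$ itself is a canonical lift of its image $j_{P_I}(\pi(s)v)$ in the Jacquet module. Granting this, the formula becomes a direct computation using how $j_{P_I}$ intertwines the $M_{P_I}$-action up to the factor $\delta_{P_I}^{1/2}$.

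First I would fix a compact open subgroup $K_0\subseteq G$ which fixes $v$ and admits an Iwahori-type factorization $K_0=(K_0\cap U_{P_I}^{-})(K_0\cap M_{P_I})(K_0\cap U_{P_I})$ relative to $P_I$. Next, I would invoke the standard lemma from Casselman's theory of Jacquet modules: for any open compact $U_0\subseteq U_{P_I}$ there exists $\varepsilon>0$ such that for all $s\in S_{P_I}^{-}(\varepsilon)$ one has $sU_0s^{-1}\supseteq K_0\cap U_{P_I}$, and consequently $\pi(s)v$ lies in the image of the canonical lifting subspace used to define $r_{P_I}$ in \cite[\S5]{MR2428854}. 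In other words, $\pi(s)v$ represents its own Jacquet-module image in the sense that $\langle r_{P_I}(\lambda),\overline{\pi(s)v}\rangle=\langle\lambda,\pi(s)v\rangle$, where the overline denotes passage to $V_{P_I}$.

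Then I would use the normalization convention for the Jacquet module, which gives
\begin{equation*}
    j_{P_I}(\pi(s)v)=\delta_{P_I}^{1/2}(s)\,\pi_{P_I}(s)\,j_{P_I}(v)
\end{equation*}
for every $s\in M_{P_I}$. Substituting this identity into the defining relation of $r_{P_I}$ applied to $\pi(s)v$, one obtains
\begin{equation*}
    \langle\lambda,\pi(s)v\rangle=\langle r_{P_I}(\lambda),j_{P_I}(\pi(s)v)\rangle=\delta_{P_I}^{1/2}(s)\langle r_{P_I}(\lambda),\pi_{P_I}(s)j_{P_I}(v)\rangle,
\end{equation*}
which is the claimed equation.

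The main obstacle is the Casselman-type lemma in Step 2: locating an explicit $\varepsilon\le 1$ so that $s\in S_{P_I}^{-}(\varepsilon)$ forces $\pi(s)v$ to be a canonical lift. This requires tracking how the conjugation by $s$ expands $K_0\cap U_{P_I}$ across the simple roots in $\Delta\smallsetminus I$, and is precisely where the condition $|s^{\alpha}|_F\le\varepsilon$ for all $\alpha\in\Delta\smallsetminus I$ enters. Once this contraction estimate is in place, the rest of the argument is bookkeeping with the normalized Jacquet module.
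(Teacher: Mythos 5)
Your high-level strategy --- reduce to the defining property of $r_{P_I}$ by showing that $\pi(s)v$ becomes a canonical lift once $s\in S_{P_I}^{-}(\varepsilon)$ with $\varepsilon$ small, then use the twist $j_{P_I}(\pi(s)v)=\delta_{P_I}^{1/2}(s)\pi_{P_I}(s)j_{P_I}(v)$ --- is the right one; this is exactly the route Kato--Takano take in \cite[Section 5]{MR2428854}, to which the paper simply defers. But the proposal has a concrete sign error and leaves the decisive step unproved.

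The containment in your Step 2 is backwards. For $s\in S_{P_I}^{-}(\varepsilon)$ and $\alpha\in\Delta\smallsetminus I$ one has $|s^{\alpha}|_F\le\varepsilon<1$, so conjugation by $s$ \emph{contracts} $U_{P_I}$: for any compact open $U_0\subseteq U_{P_I}$, $sU_0s^{-1}$ shrinks, and for $\varepsilon$ small one gets $sU_0s^{-1}\subseteq K_0\cap U_{P_I}$, not $\supseteq$. What is expanded by $s$ is the opposite unipotent radical: $s(K_0\cap U_{P_I}^{-})s^{-1}\supseteq K_0\cap U_{P_I}^{-}$, and that is the side of the Iwahori factorization on which Casselman's stabilization and the canonical-lift condition are actually formulated. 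As written, the containment you invoke neither holds nor points at the relevant subgroup.

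More importantly, the inference ``and consequently $\pi(s)v$ lies in the image of the canonical lifting subspace'' is asserted, not proved, and this is precisely the content of the proposition. The canonical lift in \cite{MR2428854} is not characterized by a single subgroup containment; it rests on Casselman's lemma that, for $v\in V^{K_0}$ with $K_0$ Iwahori-factored, certain $U_{P_I}$- or $U_{P_I}^{-}$-averages of $\pi(s)v$ stabilize for $s$ deep, and one must check that stabilization places $\pi(s)v$ in the specific section used to define $r_{P_I}$. You correctly flag this as ``the main obstacle,'' but the proposal then stops short of supplying it, so the argument is incomplete exactly where the proposition has its content. Once that lemma is in place with the $U_{P_I}^{-}$-side conventions, Steps 3 and 4 of your proposal are correct and finish the proof.
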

\begin{proof}
	See \cite[Proposition\ 5.5.\ p.22]{MR2428854}.
\end{proof}

\subsection{Relative Exponents}
In this subsection, we will recall the notion of ``relative exponents'' introduced by Kato-Takano in \cite{MR2566307}.

Let $(\pi,V)$ be a finitely generated admissible representation of $G$, let $Z$ be a closed subgroup of center $Z_G$ of $G$, and let $\widehat{Z}$ be the quasi-characters of $Z$. For each $\chi\in \widehat{Z}$, we define
\begin{equation*}
	V_{\chi}\coloneqq \{v\in V\ |\ \text{There exists}\ d\in \BN\  \text{such that}\ (\pi(z)-\chi(z))^dv=0\ \text{for all}\ z\in Z \}.
\end{equation*}
Namely $V_{\chi}$ is the generalized $\chi$-eigenspace.

Moreover, let $(\pi,V)$ be $H$-distinguished. Let $\bf P=MU$ be a $\sigma$-split parabolic subgroup and let ${\bf S}_{\bf P}$ be the set defined by (\ref{eq:S_P}). For each $\lambda\in\Hom_H(\pi,{\bf 1})$, we define
\begin{align*}
	\mathcal{E}xp_{S_P}(\pi_P,\lambda)&=\{\chi\in \widehat{S_P}\ |\ r_P(\lambda)\neq 0\ {\rm on}\  V_{P,\chi}\},\\
	{\rm Exp}_{S_P}(\pi_P,\lambda)&=\{{\rm Re}(\chi)\in\mathfrak{s}_{{\bf P}}^{\ast}\ |\ \chi\in \mathcal{E}xp_{S_P}(\pi_P,\lambda)\}.
\end{align*}
We call each element in $\mathcal{E}xp_{S_P}(\pi_P,\lambda)$ an $(H,\lambda)$-relative central exponent of $\pi$ along $P$, and each element in ${\rm Exp}_{S_P}(\pi_P,\lambda)$ an $(H,\lambda)$-relative exponent of $\pi$ along $P$.

\subsection{Classes of $H$-distinguished representations}
In this subsection, we will recall $H$-relatively cuspidal representations, $H$-relatively square integrable representations and $H$-relatively tempered representations. Furtheremore, we will recall the theorems that characterize them.

\begin{dfn} Let $(\pi,V)$ be an $H$-distinguished representation of $G$.
    \begin{enumerate}[label=\arabic*., leftmargin=*]
		\item
        \begin{enumerate}
            \item For each nonzero $\lambda\in\Hom_H(\pi,{\bf 1})$, a representation $(\pi,V)$ is said to be $(H,\lambda)$-relatively cuspidal if the $(H,\lambda)$-matrix coefficients $\varphi_{\lambda,v}$ are compactly supported modulo $Z_GH$ for all $v\in V$.

            \item A representation $(\pi,V)$ is said to be $H$-relatively cuspidal if it is $(H,\lambda)$-relatively cuspidal for all $\lambda\in\Hom_H(\pi,{\bf 1})$.
        \end{enumerate}
    \end{enumerate}

    \noindent Assume $(\pi,V)$ admits a unitary central character.
    \begin{enumerate}[label=\arabic*., leftmargin=*, start=2]
        \item
        \begin{enumerate}
        \item For each nonzero $\lambda\in\Hom_H(\pi,{\bf 1})$, a representation $(\pi,V)$ is said to be $(H,\lambda)$-relatively square integrable if the $(H,\lambda)$-matrix coefficients $\varphi_{\lambda,v}$ are square integrable modulo $Z_GH$ for all $v\in V$.

        \item A representation $(\pi,V)$ is said to be $H$-relatively square integrable if it is $(H,\lambda)$-relatively square integrable for all $\lambda\in\Hom_H(\pi,{\bf 1})$.
        \end{enumerate}

        \item
        \begin{enumerate}
            \item For each nonzero $\lambda\in\Hom_H(\pi,{\bf 1})$, a representation $(\pi,V)$ is said to be $(H,\lambda)$-relatively tempered if the $(H,\lambda)$-matrix coefficients $\varphi_{\lambda,v}$ are in $L^{2+\varepsilon}(G/Z_GH)$ for all $\varepsilon >0$ and $v\in V$.

            \item A representation $(\pi,V)$ is said to be $H$-relatively tempered if it is $(H,\lambda)$-relatively tempered for all $\lambda\in\Hom_H(\pi,{\bf 1})$.
        \end{enumerate}
    \end{enumerate}
\end{dfn}

The following theorem characterizes $H$-relatively cuspidal representations.
\begin{thm}\label{thm:criterion_of_relatively_cusp} Let $(\pi,V)$ be an $H$-distinguished representation of $G$ and $\lambda\in\Hom_H(\pi,{\bf 1})$. Then it is $(H,\lambda)$-relatively cuspidal if and only if $r_P(\lambda)=0$ for all proper $\sigma$-split parabolic subgroups $P$ of $G$. Furthermore, if all $\sigma$-split parabolic subgroups are $H$-conjugate to a standard one, then the condition has to be checked only for the standard $\sigma$-split parabolic subgroup.
\end{thm}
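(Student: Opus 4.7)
The plan is to use Proposition \ref{prop:r_P} as the bridge between $H$-matrix coefficients and the linear forms $r_P(\lambda)$ on Jacquet modules, combined with a Cartan-type decomposition of $G/H$ into pieces indexed by standard $\sigma$-split parabolic subgroups. This is the $p$-adic symmetric space analogue of Casselman's square-integrability criterion, so the structure of the argument should parallel that classical proof, with $A^-$ replaced by $S_P^-(\varepsilon)$ and proper parabolics replaced by proper $\sigma$-split parabolics.

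For the easy direction ($\Leftarrow$), I would assume $r_P(\lambda)=0$ for every proper $\sigma$-split parabolic $P$ and show that $\varphi_{\lambda,v}$ is compactly supported modulo $Z_GH$. Using a $(G,H)$-Cartan-type decomposition (of the form $G = K \cdot \bigcup_P S_P^-(\varepsilon_P)\cdot H$, where $P$ ranges over representatives of $H$-conjugacy classes of $\sigma$-split parabolics and $K$ is a suitable compact set absorbing the smoothness of $v$), it suffices to examine $\varphi_{\lambda,v}$ on each stratum. On the stratum corresponding to $P$, Proposition \ref{prop:r_P} gives
\begin{equation*}
\varphi_{\lambda,v}(s) = \delta_{P}^{1/2}(s)\,\langle r_{P}(\lambda),\pi_{P}(s)j_{P}(v)\rangle
\end{equation*}
for $s \in S_P^-(\varepsilon)$, which vanishes by hypothesis; the leftover region $S_P\setminus S_P^-(\varepsilon)$ is compact modulo $S_G$, so compactness of the support modulo $Z_GH$ follows after standard bookkeeping involving $K$-finiteness of $v$.

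For the converse ($\Rightarrow$), suppose some proper $\sigma$-split $P$ has $r_P(\lambda)\neq 0$; I would derive a contradiction with compact support. Pick $\bar v \in V_{P}$ with $\langle r_{P}(\lambda),\bar v\rangle \neq 0$ and lift it to a canonical lift $v\in V$. Applying Proposition \ref{prop:r_P} again,
\begin{equation*}
\varphi_{\lambda,v}(s) = \delta_P^{1/2}(s)\,\langle r_P(\lambda),\pi_P(s)j_P(v)\rangle
\end{equation*}
for all $s\in S_P^-(\varepsilon)$. Since $\pi_P$ is a finite-length admissible representation of $M_P$, its matrix coefficient on the central torus $S_P$ is (up to the $\delta_P^{1/2}$ twist) a finite sum of terms of the form $\chi(s)P_\chi(s)$ with $\chi$ a quasi-character and $P_\chi$ a polynomial in valuations, and at least one such term is genuinely nonzero by our choice of $v$. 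One then checks that the set of $s \in S_P^-(\varepsilon)$ where this function does not vanish is non-compact modulo $Z_G(S_P\cap H)$, which, since $S_P\cap H$ is anisotropic (as $S_P$ is $\sigma$-split), contradicts compact support of $\varphi_{\lambda,v}$ modulo $Z_GH$. The main obstacle is precisely this last step — extracting non-compact support on $G/Z_GH$ from non-vanishing on $S_P^-(\varepsilon)$ — which hinges on the fact that a $\sigma$-split torus meets $H$ only in its torsion, so its image in $G/Z_GH$ is discrete and non-compact in the relevant directions.

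Finally, for the supplementary statement, if every $\sigma$-split parabolic is $H$-conjugate to a standard one by some $h\in H$, then the natural compatibility $r_{hPh^{-1}}(\lambda) = \pi(h)\cdot r_P(\lambda)$ together with the $H$-invariance of $\lambda$ forces $r_P(\lambda)=0$ if and only if $r_{hPh^{-1}}(\lambda)=0$, so checking on standard $\sigma$-split parabolics suffices. The hypothesis $(\mathbf{M}_0\mathbf{H})(F) = M_0H$ from Lemma \ref{lem:conjugate_of_sigma-split} is precisely what guarantees this reduction is available in applications.
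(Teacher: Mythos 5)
The paper does not give its own proof of this theorem; it simply cites Kato--Takano \cite[Theorem 6.2]{MR2428854}. Your outline captures exactly the strategy used there: a relative Cartan decomposition of $G/H$ (due to Benoist--Oh and Delorme--S\'{e}ch\`{e}rre, which is the non-trivial geometric input you invoke but don't prove), Proposition~\ref{prop:r_P} to transfer vanishing/non-vanishing of $r_P(\lambda)$ to the behaviour of $\varphi_{\lambda,v}$ deep in $S_P^-$, the exponential-polynomial structure of Jacquet module matrix coefficients for the forward implication, and the $H$-conjugation compatibility of $r_P$ for the reduction to standard parabolics. The one place your sketch is loose is the phrase $G=K\cdot\bigcup_P S_P^-(\varepsilon_P)\cdot H$ with the complementary region ``compact modulo $S_G$'': what is actually used is $G=K S_0^- H$ (for the maximal $\sigma$-split torus, restricted to the dominant cone by the little Weyl group), subdivided into neighbourhoods of the faces of the cone indexed by standard $\sigma$-split parabolics, with $S_0^-\smallsetminus S_0^-(\varepsilon)$ compact modulo $S_G$ -- not $S_0\smallsetminus S_0^-(\varepsilon)$, which would be false. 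With that correction, the argument matches the cited source.
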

\begin{proof}
	See \cite[Theorem\ 6.2.\ p.26]{MR2428854}.
\end{proof}

As a consequence, we have the following proposition.
\begin{prop}\label{prop:relatively_cuspidal} Supercuspidal $H$-distinguished representations are $H$-relatively cuspidal.
\end{prop}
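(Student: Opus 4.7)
The plan is to deduce this as an immediate corollary of Theorem \ref{thm:criterion_of_relatively_cusp}, exploiting the fact that Jacquet modules of supercuspidal representations vanish. Concretely, let $(\pi,V)$ be a supercuspidal $H$-distinguished representation, fix an arbitrary $\lambda\in\Hom_H(\pi,{\bf 1})$, and let ${\bf P}={\bf M}{\bf U}$ be any proper $\sigma$-split parabolic subgroup of ${\bf G}$. Since $P$ is in particular a proper parabolic subgroup, supercuspidality of $\pi$ forces the normalized Jacquet module $V_P$ to be zero, so $\pi_P$ is the zero representation.

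Next I would observe that $r_P(\lambda)$ lives in $\Hom_{M^\sigma}(\pi_P,{\bf 1})$ by the very definition of the Kato--Takano map recalled in the previous subsection. Because $V_P=0$, this Hom-space is trivially zero, and hence $r_P(\lambda)=0$ for every proper $\sigma$-split parabolic subgroup $P$. Applying Theorem \ref{thm:criterion_of_relatively_cusp}, this vanishing implies that $\pi$ is $(H,\lambda)$-relatively cuspidal. Since $\lambda\in\Hom_H(\pi,{\bf 1})$ was arbitrary, $\pi$ is $H$-relatively cuspidal.

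There is essentially no obstacle here; the entire content of the argument is that ordinary supercuspidality implies vanishing of Jacquet modules along all proper parabolic subgroups, which is strictly stronger than the Kato--Takano criterion that demands vanishing of $r_P(\lambda)$ only for proper $\sigma$-split parabolic subgroups. The only care needed is to state the argument for every $\lambda$ rather than a single one, so that one obtains the unadorned notion of $H$-relative cuspidality rather than the $(H,\lambda)$-version.
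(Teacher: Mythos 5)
Your argument is correct, and it is exactly the argument in Kato--Takano's Proposition~8.1 of \cite{MR2428854}, which the paper cites without reproducing: supercuspidality kills every proper Jacquet module $V_P$, so $\Hom_{M^\sigma}(\pi_P,{\bf 1})=0$ for each proper $\sigma$-split $P$, hence $r_P(\lambda)=0$, and Theorem~\ref{thm:criterion_of_relatively_cusp} gives $(H,\lambda)$-relative cuspidality; varying $\lambda$ gives $H$-relative cuspidality. Same approach, no gaps.
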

\begin{proof}
	See \cite[Proposition\ 8.1.\ p.32]{MR2428854}.
\end{proof}

Next, we will recall ``Casselman criteria'' for $H$-relative square square integrability and $H$-relative temperedness.

\begin{thm} Let $(\pi,V)$ be a finitely generated $H$-distinguished representation of $G$ which admits a unitary central character. Fix $\lambda\in\Hom_H(\pi,{\bf 1})$.
\begin{enumerate}[leftmargin=*]
		\item A representation $(\pi,V)$ is $(H,\lambda)$-relatively square integrable if and only if for all $\sigma$-split parabolic subgroups $P$, we have
		\begin{equation*}
			\nu\in+\mathfrak{s}_{\bf P}^{\ast}
		\end{equation*}
		for all $\nu\in{\rm Exp}_{S_P}(\pi_P,\lambda)$.

		\item A representation $(\pi,V)$ is $(H,\lambda)$-relatively tempered if and only if for all $\sigma$-split parabolic subgroups $P$, we have
		\begin{equation*}
			\nu\in\overline{+\mathfrak{s}_{\bf P}^{\ast}}
		\end{equation*}
		for all $\nu\in{\rm Exp}_{S_P}(\pi_P,\lambda)$.
	\end{enumerate}
Furthermore, if all $\sigma$-split parabolic subgroups are $H$-conjugate to a standard one, then the conditions have to be checked only for the standard $\sigma$-split parabolic subgroup.
\end{thm}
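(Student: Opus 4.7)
The plan is to adapt Casselman's classical criterion for square integrability and temperedness of ordinary matrix coefficients to the relative setting, with the $H$-matrix coefficients replacing ordinary matrix coefficients and with Proposition \ref{prop:r_P} playing the role of the asymptotic expansion of matrix coefficients along the Jacquet module. Both parts of the theorem share the same structure: one analyses the integrability of $\varphi_{\lambda,v}$ on $G/Z_GH$ by reducing it, via a Cartan-type decomposition of the symmetric space, to integrability on the negative chambers $S_{P}^{-}(\varepsilon)$ of the various $\sigma$-split tori $S_P$, and then uses the generalized eigenspace decomposition of $V_P$ under $S_P$ to convert integrability on each piece into the positivity condition on the relative exponents.

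For the necessity direction, fix a standard $\sigma$-split parabolic $P$ and suppose some $\chi\in\mathcal{E}xp_{S_P}(\pi_P,\lambda)$ violates the required positivity on $\mathrm{Re}(\chi)$. Pick $\bar v\in V_{P,\chi}$ with $\langle r_P(\lambda),\bar v\rangle\neq 0$ and a canonical lift $v\in V$. Proposition \ref{prop:r_P} gives, for $s\in S_P^{-}(\varepsilon)$,
\begin{equation*}
    \varphi_{\lambda,v}(s^{-1})=\delta_P^{1/2}(s)\,\langle r_P(\lambda),\pi_P(s)\bar v\rangle,
\end{equation*}
and the right-hand side behaves as $\delta_P^{1/2}(s)\chi(s)$ up to polynomial factors in $\log|s^\alpha|_F$ arising from the possibly non-semisimple action on $V_{P,\chi}$. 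Plugging this into the $L^p$-norm of $\varphi_{\lambda,v}$, the Jacobian of the Cartan-type decomposition on the factor $S_P^{-}$ supplies a compensating $\delta_P^{-1}(s)$, so the integrand reduces, on the lattice $S_P^{-}(\varepsilon)/(Z_G\cap S_P)$, to $|\chi(s)|^p$ up to logarithms. The failure of the exponent condition forces this sum to diverge, contradicting $(H,\lambda)$-relative square integrability (for $p=2$) or temperedness (for some $p=2+\varepsilon$).

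For the sufficiency direction, I first need a polar decomposition of the form
\begin{equation*}
    G=Z_G\cdot\Omega\cdot\bigcup_{P}S_P^{-}(\varepsilon)\cdot H,
\end{equation*}
with $\Omega\subseteq G$ compact and $P$ ranging over representatives of the $H$-conjugacy classes of standard $\sigma$-split parabolic subgroups; this is available for $p$-adic symmetric spaces, and Lemma \ref{lem:conjugate_of_sigma-split} guarantees that the union on the right hits every $H$-conjugacy class of $\sigma$-split parabolic. On each piece, Proposition \ref{prop:r_P} combined with the generalized eigenspace decomposition of $j_P(\pi(\omega)v)$ produces a uniform bound
\begin{equation*}
    |\varphi_{\lambda,v}(g)|\le C\,\delta_P^{1/2}(s)\sum_{\chi\in\mathcal{E}xp_{S_P}(\pi_P,\lambda)}|\chi(s)|\,(1+\|\log s\|)^{d}
\end{equation*}
for $g=z\omega s h$, uniformly as $\omega$ ranges over $\Omega$. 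Integrating the $p$-th power against the invariant measure on $G/Z_GH$ and absorbing the Jacobian $\delta_P^{-1}(s)$ reduces convergence to summability of $\sum|\chi(s)|^p$ on the lattice $S_P^{-}(\varepsilon)/(Z_G\cap S_P)$, which is exactly the condition $\mathrm{Re}(\chi)\in+\mathfrak{s}_{\bf P}^{\ast}$ for $p=2$ and $\mathrm{Re}(\chi)\in\overline{+\mathfrak{s}_{\bf P}^{\ast}}$ for every $p=2+\varepsilon$. The final clause, that it suffices to check standard $\sigma$-split parabolic subgroups when all such subgroups are $H$-conjugate to standard ones, is immediate by transferring the exponent condition along $H$-conjugation using Lemma \ref{lem:conjugate_of_sigma-split}.

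The main obstacle is the sufficiency direction, specifically the polar decomposition of $G/H$ with its explicit Jacobian: only once the finitely many $H$-conjugacy classes of standard $\sigma$-split parabolic subgroups and the precise form of the invariant measure on each piece are in place does the local asymptotic of Proposition \ref{prop:r_P} globalize to the required $L^p$ bound. Once that decomposition is set up, the clean cancellation of $\delta_P^{1/2}$ from Proposition \ref{prop:r_P} against the $\delta_P^{-1}$ Jacobian is the computational heart of the argument, and the logarithmic corrections from non-semisimple $S_P$-action on $V_{P,\chi}$ are harmless because they do not affect convergence of $\sum|\chi(s)|^{2+\varepsilon}$ for any $\varepsilon>0$.
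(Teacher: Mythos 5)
The paper does not prove this theorem; its ``proof'' is a citation to \cite[Theorem 4.7]{MR2566307} and \cite[Theorem 3.12]{MR3889766}. So there is no argument in the paper against which your sketch can be directly checked, and the comparison must be with the strategy in those references.

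Your outline does capture the overall shape of the Kato--Takano and Kato--Takano--Takeda proofs: reduce $L^p$-integrability of $\varphi_{\lambda,v}$ on $G/Z_GH$ to estimates on the negative chambers $S_P^-(\varepsilon)$, use Proposition~\ref{prop:r_P} to express the coefficient there via the Jacquet module, then translate into positivity of relative exponents. But there are two places where your sketch is currently a placeholder rather than a proof, and they are precisely the technical heart of the cited arguments. First, the ``polar decomposition'' you write down, $G = Z_G\,\Omega\,\bigcup_P S_P^-(\varepsilon)\,H$, is not quite the decomposition that exists: what the references have (via Benoist--Oh and Delorme--S\'echerre) is a finite union $G = \bigcup_i K\,\overline{S_0^-}\,x_i H$ with $K$ compact open, $S_0$ a single maximal $\sigma$-split torus and finitely many $x_i$; the different $\sigma$-split parabolics enter only afterwards, by cutting the closed negative cone $\overline{S_0^-}$ into faces, not by varying $P$ in the decomposition of $G$. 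You would also need to reconcile that finite set $\{x_i\}$ with Lemma~\ref{lem:conjugate_of_sigma-split}, which gives conjugacy in $(\mathbf{M}_0\mathbf{H})(F)$, not in $H$. Second, the statement that the Jacobian ``supplies a compensating $\delta_P^{-1}(s)$'' is an estimate (up to polynomial factors), not an identity; proving the needed volume bound for $p$-adic symmetric spaces is exactly the content of the input from Benoist--Oh that the references rely on. Until these are supplied, the sufficiency direction is a plan rather than a proof; the route is the right one, but the gaps are genuine.
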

\begin{proof}
	See \cite[Theorem\ 4.7.\ p.1445]{MR2566307} and \cite[Theorem\ 3.12.\ p.282]{MR3889766}.
\end{proof}

As a consequence, we have the following corollaries.

\begin{cor}\label{cor:relatively_square} Let $(\pi,V)$ be a finitely generated $H$-distinguished representation which admits a unitary central character. If $(\pi,V)$ is square integrable, then it is $H$-relatively square integrable.
\end{cor}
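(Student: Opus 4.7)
The plan is to deduce this corollary from the relative Casselman criterion just stated by bootstrapping from the classical Casselman criterion for square integrability. Fix a nonzero $\lambda\in\Hom_H(\pi,{\bf 1})$; by the relative criterion applied to each $\sigma$-split parabolic $P=P_I$, it suffices to show that every $\nu\in{\rm Exp}_{S_P}(\pi_P,\lambda)$ lies in $+\mathfrak{s}_{\bf P}^{\ast}$.

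So let $\chi\in\mathcal{E}xp_{S_P}(\pi_P,\lambda)$, which in particular means that the generalized $\chi$-eigenspace $V_{P,\chi}$ for the $S_P$-action on $V_P$ is nonzero. Since $S_P\subseteq A_P$ and $V_P$ is a finitely generated admissible representation of $M_P$, the space $V_P$ decomposes into generalized eigenspaces for the $A_P$-action, and $V_{P,\chi}$ is the sum of the generalized $\tilde{\chi}$-eigenspaces $V_{P,\tilde{\chi}}$ over those $\tilde{\chi}\in\widehat{A_P}$ whose restriction to $S_P$ equals $\chi$. Pick one such $\tilde{\chi}$ with $V_{P,\tilde{\chi}}\neq 0$; this $\tilde{\chi}$ is then an ordinary exponent of $\pi$ along $P$.

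Because $(\pi,V)$ is square integrable with unitary central character, the classical Casselman criterion yields ${\rm Re}(\tilde{\chi})=\sum_{\alpha\in\Delta\smallsetminus I}c_{\alpha}\alpha$ in $\mathfrak{a}_{\bf P}^{\ast}$ with all $c_{\alpha}>0$. The projection $\mathfrak{a}_{\bf P}^{\ast}\to\mathfrak{s}_{\bf P}^{\ast}$ induced by restriction of rational characters from $A_P$ to $S_P$ sends $\alpha\in\Delta\smallsetminus I$ to $\rho_{\bf P}(\alpha)$, and it commutes with the real-part map, so applying it yields ${\rm Re}(\chi)=\sum_{\alpha\in\Delta\smallsetminus I}c_{\alpha}\rho_{\bf P}(\alpha)$, which lies in $+\mathfrak{s}_{\bf P}^{\ast}$ by the definition in Section \ref{sec:positive_cones}.

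The only delicate point is the bookkeeping in the middle step: verifying that a nonzero generalized $\chi$-eigenspace for $S_P$ contains a nonzero generalized $\tilde{\chi}$-eigenspace for $A_P$ with $\tilde{\chi}|_{S_P}=\chi$, and that restriction $\widehat{A_P}\to\widehat{S_P}$ is compatible with the real-part construction recalled above. Both follow from the standard decomposition of admissible $M_P$-modules under the action of the split center together with the functoriality of the map ${\rm Re}$, so no genuine obstacle is expected, and the argument is essentially a comparison of the two positive-cone conditions under the projection $\mathfrak{a}_{\bf P}^{\ast}\to\mathfrak{s}_{\bf P}^{\ast}$.
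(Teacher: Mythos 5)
Your proof is correct and is the natural argument: pass from the $S_P$-generalized eigenspace decomposition to the finer $A_P$-decomposition, apply the classical Casselman square-integrability criterion to the resulting exponents $\tilde{\chi}$, and observe that the positive-cone condition is preserved under the restriction map $\mathfrak{a}_{\bf P}^{\ast}\to\mathfrak{s}_{\bf P}^{\ast}$, which is compatible with ${\rm Re}$. The paper itself gives no argument and simply cites Kato--Takano (Proposition 4.10 of that reference), and your reconstruction is essentially the argument given there.
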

\begin{proof} See \cite[Proposition\ 4.10.\ p.1441]{MR2566307}.
\end{proof}

\begin{cor}\label{cor:relatively_tempered} Let $(\pi,V)$ be a finitely generated $H$-distinguished representation which admits a unitary central character. If $(\pi,V)$ is tempered, then it is $H$-relatively tempered.
\end{cor}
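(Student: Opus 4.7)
The plan is to derive the relative Casselman condition for temperedness from its absolute counterpart, using that any $\sigma$-split parabolic subgroup $P$ is in particular a parabolic subgroup of $G$, and that the relative central exponents of $\pi$ along $S_P$ are obtained from the ordinary $A_P$-exponents of $\pi_P$ by restriction along $S_P \hookrightarrow A_P$.

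Fix a nonzero $\lambda \in \Hom_H(\pi, {\bf 1})$. By the Casselman-type criterion stated immediately above, it suffices to show that for every $\sigma$-split parabolic subgroup $P$ of $G$, every element of ${\rm Exp}_{S_P}(\pi_P, \lambda)$ lies in $\overline{+\mathfrak{s}_{\bf P}^{\ast}}$. Fix such a $P$, and let $\chi \in \mathcal{E}xp_{S_P}(\pi_P, \lambda)$, so in particular $V_{P,\chi} \neq 0$.

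The generalized $A_P$-eigenspace decomposition of the Jacquet module $V_P$ refines its generalized $S_P$-eigenspace decomposition via restriction $\widehat{A_P} \to \widehat{S_P}$, so some $\chi' \in \widehat{A_P}$ with $\chi'|_{S_P} = \chi$ satisfies $V_{P,\chi'} \neq 0$; this $\chi'$ is therefore an ordinary $A_P$-exponent of $\pi_P$. Temperedness of $\pi$ together with the absolute Casselman criterion then yield ${\rm Re}(\chi') \in \overline{+\mathfrak{a}_{\bf P}^{\ast}}$, i.e.\ a nonnegative linear combination of the generators $\rho_{\bf P}(\alpha)$ for $\alpha \in \Delta \setminus I$ (where $I \subsetneq \Delta$ is the subset defining $P$ relative to a suitable choice of simple roots). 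Applying the restriction map $\mathfrak{a}_{\bf P}^{\ast} \to \mathfrak{s}_{\bf P}^{\ast}$ induced by $S_P \hookrightarrow A_P$ writes ${\rm Re}(\chi)$ as a nonnegative combination of the images of these generators, which by the definitions in Section \ref{sec:positive_cones} are precisely the generators of $\overline{+\mathfrak{s}_{\bf P}^{\ast}}$. Hence ${\rm Re}(\chi) \in \overline{+\mathfrak{s}_{\bf P}^{\ast}}$, as required.

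The main technical point is the final compatibility: verifying that the restriction $\mathfrak{a}_{\bf P}^{\ast} \to \mathfrak{s}_{\bf P}^{\ast}$ carries $\overline{+\mathfrak{a}_{\bf P}^{\ast}}$ into $\overline{+\mathfrak{s}_{\bf P}^{\ast}}$. This is a root-system compatibility stemming from the ordering convention in subsection \ref{subsec:root_systems} (which forces $\sigma(\alpha) < 0$ whenever $\alpha > 0$ and $\sigma(\alpha) \neq \alpha$) together with the inclusion $\Delta^{\sigma} \subseteq I$. It is the same ingredient underlying the proof of Corollary \ref{cor:relatively_square}; the tempered case differs only in replacing the open positive cone with its closure, so no further work beyond invoking the absolute Casselman criterion for temperedness (in place of that for square integrability) is needed.
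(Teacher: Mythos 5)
Your argument is correct and matches the cited source. The paper offers no proof of its own here (only a citation to Kato--Takano--Takeda), but the route you describe---lifting relative $S_P$-exponents to absolute $A_P$-exponents via the refinement of generalized eigenspaces, invoking the absolute Casselman criterion for temperedness, and pushing the closed positive cone forward along the restriction $\mathfrak{a}_{\bf P}^{\ast}\to\mathfrak{s}_{\bf P}^{\ast}$ (which sends the generators $\alpha|_{A_P}$ to $\rho_{\bf P}(\alpha)$ for $\alpha\in\Delta\smallsetminus I$)---is exactly that derivation, and, as you note, it is the same mechanism that yields Corollary~\ref{cor:relatively_square}.
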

\begin{proof} See \cite[Corollary\ 3.13.\ p.282]{MR3889766}.
\end{proof}

By Proposition \ref{prop:relatively_cuspidal}, Corollary \ref{cor:relatively_square}, and Corollary \ref{cor:relatively_tempered}, the following proposition follows immediately.
\begin{prop} Let $(\pi,V)$ be an $H$-distinguished representation of $G$.

    \noindent (1) Every superuspidal $H$-distinguished representation is $H$-relatively cuspidal.

    \noindent (2) Every irreducible square integrable $H$-distinguished representation is $H$-relatively square integrable.

    \noindent (3) Every irreducible tempered $H$-distinguished representation is $H$-relatively tempered.
\end{prop}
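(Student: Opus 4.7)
The plan is essentially to verify the three parts separately against the cited results, since the proposition is flagged as an immediate consequence. For part (1), I would simply cite Proposition \ref{prop:relatively_cuspidal}, which already states exactly that supercuspidal $H$-distinguished representations are $H$-relatively cuspidal; no further work is needed (irreducibility is not even used here).

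For parts (2) and (3), the only content is to verify that the hypotheses of Corollary \ref{cor:relatively_square} and Corollary \ref{cor:relatively_tempered} are met, namely that $(\pi,V)$ is finitely generated and admits a unitary central character. Both are automatic under the irreducibility assumption: an irreducible admissible representation of $G$ is generated by any nonzero vector and hence is finitely generated, while the conventional definitions of \emph{square integrable} and \emph{tempered} for irreducible representations already encode unitarity of the central character (without it, the $L^2$ and $L^{2+\varepsilon}$ integrability conditions modulo $Z_G$ make no sense). With these hypotheses verified, Corollary \ref{cor:relatively_square} gives (2) and Corollary \ref{cor:relatively_tempered} gives (3).

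There is no real obstacle; the proof is a bookkeeping step. The only thing worth double-checking is that the conventions in the cited papers of Kato--Takano and Kato--Takano--Takeda match the conventions used here, so that \emph{square integrable} and \emph{tempered} in the statement of the proposition correspond to the hypotheses of the corollaries. Since the paper has been following those references throughout, this check is routine, and the proof reduces to a single sentence of the form ``This follows immediately from Proposition \ref{prop:relatively_cuspidal}, Corollary \ref{cor:relatively_square}, and Corollary \ref{cor:relatively_tempered}.''
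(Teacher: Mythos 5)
Your proposal matches the paper exactly: the paper states the proposition "follows immediately" from Proposition \ref{prop:relatively_cuspidal}, Corollary \ref{cor:relatively_square}, and Corollary \ref{cor:relatively_tempered}, and your verification that irreducibility supplies the finite-generation and unitary-central-character hypotheses is the correct (if routine) bookkeeping that justifies that citation.
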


\section{Induced representations from split parabolic subgroups}\label{sec:induction_from_split_parabolic_subgp}
In this section, we will consider induced representations from split parabolic subgroups. For details, see \cite{MR4342358}.

We fix a $\sigma$-split parabolic subgroup $P=MU$. It is well known that $PH$ is an open in $G$ (see \cite[1.3.\ Theorem]{Vust1974OprationDG}). Let $\varXi_0$ be a (finite) set of representatives of all open $(P,H)$-double cosets in $G$ with $e\in\varXi_0$. We set $\mathcal{O}=\bigcup_{\xi\in\varXi_0}P\xi H$.

Let $(\rho,E_{\rho})$ be an irreducible smooth representation of $M$. Furthermore, we extend it to a representation of $P$ by letting $\rho|_U$ be trivial. Let $\varOmega$ be a $P$-stable locally closed subset of $G$. We set
\begin{equation*}
	I_c(\rho,\varOmega)=\left\{f:\varOmega\to E_{\rho}\ \middle|
	\begin{gathered}
		\ f \text{ is smooth, compactly supported mod} P,\\
		\text{and}\ f(px)=\delta_P(p)^{1/2}\rho(p)f(x)\\ \text{for all}\ p\in P\ \text{and}\ x\in \varOmega
	\end{gathered}
	\right\}.
\end{equation*}

Since $\mathcal{O}$ is open in $G$, we have the natural inclusion
\begin{equation*}
	\iota_{\rho}:I_c(\rho,\mathcal{O}) \hookrightarrow {\rm Ind}_P^G(\rho),
\end{equation*}
which is defined by extending functions to be zero outside $\mathcal{O}$. Since for each $\xi\in\varXi_0$, $P\xi H$ is open and closed in $\mathcal{O}$, we have the direct sum decomposition
\begin{equation*}
	I_c(\rho,\mathcal{O})=\bigoplus_{\xi\in\varXi_0}I_c(\rho,P\xi H).
\end{equation*}

By taking the $H$-invariants of the duals, we have
\begin{equation*}
	\Hom_H({\rm Ind}_P^G(\rho),\BC) \stackrel{\iota_{\rho}^{\ast}}{\longrightarrow} \Hom_H(I_c(\rho,\mathcal{O}),\BC) \simeq \bigoplus_{\xi\in\varXi_0}I_c(\rho,P\xi H).
\end{equation*}
Furthermore, we have the isomorphism
\begin{equation}\label{eq:iso}
	\Hom_{M\cap H}(\rho,\BC)\stackrel{\sim}{\longrightarrow} \Hom_H(I_c(\rho,PH),\BC),\ \lambda_0\mapsto \Lambda_0,
\end{equation}
given by
\begin{equation*}
	\langle \Lambda_0,f \rangle =\int_{M\cap H\backslash H} \langle \lambda_0,f(\dot{h}) \rangle d\dot{h}
\end{equation*}
for all $f\in I_c(\rho,PH)$.

\section{Representations of $\GL_2(F)$}\label{sec:rep_GL_2}
In this section, we will recall the representation theory of $\GL_2(F)$. In this section, let $G=\GL_2(F)$ and $P=MU$ be the standard Borel subgroup of $G$. First, we will recall how the irreducible admissible representations of $G$ are classified. The irreducible admissible representations of $G$ are as follow:
    \begin{itemize}[leftmargin=*]
        \item Supercuspidal representations.

        \item The quotient representations of ${\rm Ind}_P^G(\chi |\cdot|^{-1/2}\otimes\chi |\cdot|^{1/2})$  with respect to the $G$-stable subspace $W\coloneqq \langle (\chi|\cdot|^{1/2})\circ {\rm det} \rangle_{\BC}$ for all quasi-characters $\chi$ of $F^{\times}$. We call this the Steinberg representation twisted by $\chi$ and it is denoted by ${\rm St}_{\chi}$.

    	\item The principal series ${\rm Ind}_P^G(\chi_1\otimes\chi_2)$ for all quasi-characters $\chi_1$ and $\chi_2$ of $F^{\times}$ such that $\chi_1\chi_2^{-1}\neq |\cdot|_F^{\pm 1}$.

        \item Quasi-characters of $G$.
    \end{itemize}

Next, we will recall that the Jacquet module of ${\rm Ind}_P^G(\chi_1\otimes\chi_2)$ can be described explicitly. The Jacquet module of ${\rm Ind}_P^G(\chi_1\otimes\chi_2)$ is equivalent to the following two-dimensional representation:
    \begin{equation*}
		t=
		\begin{pmatrix}
			t_1 &  \\  & t_2
		\end{pmatrix}\mapsto
		\begin{cases}
			\begin{pmatrix}
				(\chi_1\otimes \chi_2)(t) &  \\  & (\chi_2\otimes\chi_1)(t)
			\end{pmatrix} & \text{if $\chi_1\neq \chi_2$,} \\\\
			\chi_1(t)
			\begin{pmatrix}
				1 & v(t_1/t_2) \\  & 1
			\end{pmatrix}& \text{if $\chi_1= \chi_2$,}
		\end{cases}
	\end{equation*}
where $v: F^{\times}\to \BZ$ is valuation map. Note that by the above theorem and the exactness of the Jacquet functor, if $\chi_1\chi_2^{-1}= |\cdot|_F^{-1}$, then the Jacquet module of ${\rm Ind}_P^G(\chi_1\otimes\chi_2)$ is equivalent to $\chi_2\otimes\chi_1$.

Let $H$ be a maximal $F$-split torus of $G$. Finally, we will recall the following criterion for an irreducible admissible representation of $G$ to be $H$-distinguished. Let $(\pi,V)$ be an infinite-dimensional irreducible admissible representation of $G$. Then $(\pi,V)$ is $H$-distinguished if and only if it admits a trivial central character. Furthermore, if the condition is satisfied, then $\dim\Hom_H(\pi,{\bf 1})=1$. (See for example \cite[Theorem 5.1.\ p.125]{MR3930015}.)

\section{proof for the $F$-split torus}\label{sec:main_result_in_the_split}
In this section, let ${\bf G}=\GL_2$, and let ${\bf P}={\bf M}{\bf U}$ be a standard Borel subgroup of ${\bf G}$. Namely, we set
\begin{equation*}
	{\bf P}= \left\{
	\begin{pmatrix}
		a & b \\  & c
	\end{pmatrix}
	\bigg |\ ac\neq 0\right\},\
	{\bf M}= \left\{
	\begin{pmatrix}
		a &  \\  & b
	\end{pmatrix}
	\bigg |\ ab\neq 0\right\} \text{and}\
	{\bf U}= \left\{
	\begin{pmatrix}
		1 & u \\  & 1
	\end{pmatrix}
	\right\}.
\end{equation*}
Let $\sigma={\rm Int}(w)$, where
\begin{equation*}
	w= \begin{pmatrix}
		0 & 1 \\ 1 & 0
	\end{pmatrix}.
\end{equation*}
Then $\bf H$ is
\begin{equation*}
	{\bf H}= \left\{
	\begin{pmatrix}
		a & b \\ b & a
	\end{pmatrix}
	\bigg |\ a^2-b^2\neq 0\right\}.
\end{equation*}
Classifying the $H$-distinguished representations is equivalent to classifying representations considered in Theorem \ref{thm:classification_in_the_split_case}. This is because for any irreducible admissible representation $\pi$ of $G$ and
\begin{equation*}
	q=
    \begin{pmatrix}
		1 & -1 \\ 1 & 1
	\end{pmatrix},
\end{equation*}
the map
\begin{equation*}
    \Hom_H(\pi,{\bf 1})\to \Hom_{qHq^{-1}}(\pi,{\bf 1}),\ \lambda\mapsto \lambda\circ\pi(q^{-1}),
\end{equation*}
for $\lambda\in\Hom_H(\pi,{\bf 1})$ is an isomorphism and $q{\bf H}q^{-1}={\bf M}$. Thus, we will consider $H$-distinguished representations.

We begin with some preparations. We define the maximal $\sigma$-split torus ${\bf S}_0$ by
\begin{equation*}
	\left\{
	\begin{pmatrix}
		a &  \\  & a^{-1}
	\end{pmatrix}
	\right\},
\end{equation*}
and we define the maximal $F$-split torus ${\bf A}\coloneqq {\bf M}$. Note the maximal $F$-split torus ${\bf A}$ contains ${\bf S}_0$. We take $\Delta=\{e_1-e_2 \}$ as the simple roots of $\Phi({\bf G},{\bf A})$, where
\begin{equation*}
	e_i\bigg(
	\begin{pmatrix}
		a_1 &  \\  & a_2
	\end{pmatrix}
	\bigg)
	=a_i\quad (i=1,2).
\end{equation*}
Thus, we have $\Delta^{\sigma}=\varnothing$, and hence ${\bf P}_{\Delta^{\sigma}}={\bf P}_{\varnothing}={\bf P}$. Therefore,
\begin{equation*}
	{\bf A}_{{\bf P}}={\bf A},\quad {\bf M}_{{\bf P}}={\bf A}.
\end{equation*}
Furthermore,
\begin{equation*}
	{\bf S}_{{\bf P}}={\bf S}_0.
\end{equation*}

We now describe the decomposition of $G$ into $(P,H)$-double cosets. By \cite[4\ p.722]{MR4121785}, the decomposition is given by
\begin{equation*}
	G=PH \sqcup PqH \sqcup PwqH\quad  (\text{disjoint}).
\end{equation*}
Since
\begin{equation*}
	(PqH)q^{-1}=P,\quad (PwqH)q^{-1}=
	\left\{
	\begin{pmatrix}
		\ast & \ast \\ \ast &
	\end{pmatrix}
	\right\},
\end{equation*}
both $PqH$ and $PwqH$ are closed but not open in $G$. Thus, among the $(P,H)$-double cosets in $G$, only $PH$ is open.

By the criterion in Section \ref{sec:rep_GL_2}, the irreducible $H$-distinguished representations are as follows:
\begin{itemize}[leftmargin=*]
	\item Irreducible $H$-distinguished supercuspidal representations.

    \item Steinberg representations twisted by a quadratic character $\mu$, which is square integrable.

	\item Principal series representations ${\rm Ind}_P^G(\chi\otimes\chi^{-1})$ for a quasi-character $\chi$ of $F^{\times}$, which is tempered if and only if $\chi$ is unitary.

    \item The trivial representation of $G$, which is not tempered.
\end{itemize}

\noindent Therefore, the proof of the main result reduces to establishing the following properties of the above representations:
\begin{enumerate}
	\item[Case 1] Neither the trivial representation of $G$ nor the principal series representations associated with a non unitary character $\chi$ are $H$-relatively tempered.

	\item[Case 2] The principal series representations associated with a unitary character $\chi$ are not $H$-relatively square integrable.

	\item[Case 3] The Steinberg representations twisted by nontrivial quadratic characters $\mu$ are not $H$-relatively cuspidal.

	\item[Case 4] The untwisted Steinberg representation is $H$-relatively cuspidal.
\end{enumerate}

\subsection{Proof of Case 1}
The trivial representation of $G$ is not $H$-relatively tempered since $G/Z_GH$ is not compact, where $Z_G$ is the center of $G$.

Consider the principal series representations ${\rm Ind}_P^G(\chi\otimes\chi^{-1})$ for a non unitary character $\chi$. 
We may write $\chi=\mu|\cdot|_F^\alpha$, with a quasi-character $\mu$ of  $F^{\times}$ and a positive number $\alpha$. Let $\rho=\mu|\cdot|_F^\alpha\otimes\mu^{-1}|\cdot|_F^{-\alpha}$, and let $(\pi,V)$ denote the induced representation ${\rm Ind}_P^G(\rho)$. Define the linear forms $L_1$ and $L_2$ (see \cite[Proposition\ 4.5.6\ p.478]{MR1431508}) on $V$ by
\begin{equation*}
	L_1(f)=f(1),\quad L_2(f)=\int_F f\left(
	\begin{pmatrix}
		& -1 \\ 1 &
	\end{pmatrix}
	\begin{pmatrix}
		1 & x \\  & 1
	\end{pmatrix}
	\right)dx
\end{equation*}
for $f\in V$. Since both $L_1$ and $L_2$ are $U$-invariant linear forms, they can be regarded as linear forms on the Jacquet module $(\pi_P,V_P)$. Define the functions $f_1$ and $f_2$ on $G$ by
\begin{align*}
	&f_1(g)=
	\begin{cases}
		(\delta_P^{1/2}\rho)(p)& \text{if $g=pk$ \text{with} $p\in P,k\in K_n$},\\
		0& \text{otherwise}.
	\end{cases}\\
	&f_2(g)=
	\begin{cases}
		(\delta_P^{1/2}\rho)(p)& \text{if $g=pw'u$ \text{with} $p\in P,u\in U_{\mathcal{O}_F}$},\\
		0& \text{otherwise},
	\end{cases}
\end{align*}
where
\begin{equation*}
	w'=\begin{pmatrix}
		& -1 \\ 1 &
	\end{pmatrix}
\end{equation*}
and
\begin{align*}
	&K_n\coloneqq \left\{
	\begin{pmatrix}
		a	& b \\ c & d
	\end{pmatrix}
	\in \GL_2(\mathcal{O}_F)\ \middle|\ a\equiv d\equiv 1,\ b\equiv c \equiv 0 \pmod{\varpi_F^n\mathcal{O}_F}
	\right\},\\
	&U_{\mathcal{O}_F}\coloneqq \left\{
	\begin{pmatrix}
		1	& x \\  & 1
	\end{pmatrix}
	\ \middle|\ x\in \mathcal{O}_F
	\right\}
\end{align*}
for some natural number $n$ such that $\mu$ is trivial on $1+\varpi_F^n\mathcal{O}_F$. Note that $f_1,f_2\in V$. Since $L_1(f_1)\neq 0, L_1(f_2)=0$, and $L_2(f_2)\neq 0$, we can take a dual basis $(v_1,v_2)$ of the basis $(L_1,L_2)$ such that $v_2=\bar{f_2}$, where $\bar{f}$ denotes the image of $f\in V$ in $V_P$. For the choice of the basis $(v_1,v_2)$, we have $\pi_P(m)v_1=\rho(m)v_1$ and $\pi_P(m)v_2=\rho^w(m)v_2$, where $\rho^w$ is defined by $\mu^{-1}|\cdot|_F^{-\alpha}\otimes\mu|\cdot|_F^{\alpha}$. Thus, for every $\lambda\in\Hom_H(\pi,{\bf 1})$, the $(H,\lambda)$-relative central exponent is either $\rho$ or $\rho^w$. Hence, if there exists $\lambda\in\Hom_H(\pi,{\bf 1})$ such that the $(H,\lambda)$-relative central exponent is $\rho^w$, then $\pi$ is not $H$-relatively tempered, since ${\rm Re}(\rho^w)\notin \overline{+\mathfrak{s}_{\bf P}^{\ast}}$.

We now proceed to prove the above claim. Assume that $\rho^w$ is not the $(H,\lambda)$-relative central exponent for any $\lambda\in\Hom_H(\pi,{\bf 1})$. Since $r_P(\lambda)$ is a linear form on $V_P$, it is of the form $aL_1+bL_2$ for some $a,b\in \BC$. However, we are assuming that $\rho^w$ is not $(H,\lambda)$-relative central exponent, namely $r_P(\lambda)$ vanishes on $\langle v_2 \rangle_{\BC}$, and hence $b=0$. Thus, we have
\begin{equation*}
    \langle r_P(\lambda),\pi_P(m)v \rangle =\rho(m)\langle r_P(\lambda),v \rangle
\end{equation*}
for all $v\in V_P$ and $m\in M$. By this equation and Proposition \ref{prop:r_P}, for any $f\in V$, there exists a positive number $\varepsilon\le 1$ such that
\begin{equation}\label{eq:contra}
	\langle \lambda, \pi(s)f \rangle = (\delta_P^{1/2}\rho)(s)\langle r_P(\lambda),\bar{f} \rangle
\end{equation}
for all $s\in S_{P}^{-}(\varepsilon)$. By setting $f=f_2$ in the above equation, the right-hand side is zero. Hence, if we can show the existence of $\lambda\in\Hom_H(\pi,{\bf 1})$ such that for every positive number $\varepsilon\le 1$, there exists $s\in S_{P}^{-}(\varepsilon)$ satisfying
\begin{equation*}
	\langle \lambda, \pi(s)f_2 \rangle \neq 0,
\end{equation*}
then equation \eqref{eq:contra} is a contradiction. Therefore, proving the claim reduces to showing the existence of such a $\lambda$. To this end, we prove the existence of such a $\lambda$.

We define the function $\Theta_{\alpha}$ on $G$ by
\begin{equation*}
	\Theta_\alpha(g)=\mu(d(\sigma(g)g^{-1}))|d(\sigma(g)g^{-1})|_F^{\alpha +1/2}
\end{equation*}
for $g\in G$, where $d$ denotes the map $G \to F$ such that for $g\in G$, $d(g)$ is the $(1,1)$-entry of $g$. The map $\Theta_{\alpha}$ is continuous on $G$ and satisfies
\begin{equation*}
	\Theta_\alpha(g)=
	\begin{cases}
		\delta_P^{-1/2}\rho^{-1}(p)& \text{if $g=ph$ \text{with} $p\in P,h\in H$},\\
		0& \text{otherwise}.
	\end{cases}
\end{equation*}
This shows that $\Theta_{\alpha}$ satisfies $\Theta_\alpha(pgh)=\delta_P^{-1/2}\rho^{-1}(p)\Theta_\alpha(g)$ for all $p\in P,h\in H$ and $g\in G$. Thus, if we define the linear form $\lambda$ on $V$ by
\begin{equation*}
	\langle \lambda, f \rangle =\int_{P\backslash G}\Theta_\alpha(g)f(g)dg
\end{equation*}
for all $f\in V$, then $\lambda$ is $H$-invariant. Let $s_n\in S_{P}^{-}(1)$ and $g\in Pw'U_{\mathcal{O}_F}$ be the elements defined by
\begin{equation*}
	s_n=
	\begin{pmatrix}
		\varpi^n & \\  & \varpi^{-n}
	\end{pmatrix},\quad
	g=
	\begin{pmatrix}
		a & b \\  & c
	\end{pmatrix}
	\begin{pmatrix}
		& -1 \\ 1 &
	\end{pmatrix}
	\begin{pmatrix}
		1 & u \\  & 1
	\end{pmatrix},
\end{equation*}
where $n\ge 1,\ a,c\in F^{\times}, b\in F$ and $u\in\mathcal{O}_F$. We have
\begin{equation}\label{eq:gs_n}
	gs_n=
	\begin{pmatrix}
		a\varpi^{-2n} & b\varpi^{2n} \\  & c\varpi^{2n}
	\end{pmatrix}
	\begin{pmatrix}
		& -1 \\ 1 &
	\end{pmatrix}
	\begin{pmatrix}
		1 & u\varpi^{-2n} \\  & 1
	\end{pmatrix}.
\end{equation}
By \eqref{eq:gs_n}, we see that the support of $\pi(s_n)f_2$ is equal to $Pw'U_{\varpi^{2n}\mathcal{O}_F}$, where $U_{\varpi^{2n}\mathcal{O}_F}$ denotes the set of matrices of the form
\begin{equation*}
	\begin{pmatrix}
		1& u\varpi^{2n} \\  & 1
	\end{pmatrix}
\end{equation*}
with $u\in\mathcal{O}_F$. Let $g\in Pw'U_{\varpi^{2n}\mathcal{O}_F}$ be the element defined by
\begin{equation*}
	g=
	\begin{pmatrix}
		a & b \\  & c
	\end{pmatrix}
	\begin{pmatrix}
		& -1 \\ 1 &
	\end{pmatrix}
	\begin{pmatrix}
		1 & u \\  & 1
	\end{pmatrix},
\end{equation*}
where $n\ge 1,\ a,c\in F^{\times}, b\in F,\ u\in\varpi^{2n}\mathcal{O}_F$, then we have
\begin{equation}\label{eq:g}
	g=
	\begin{pmatrix}
		a(u^2-1)^{-1} & -au(u^2-1)^{-1} \\  & c
	\end{pmatrix}
	\begin{pmatrix}
		u & 1 \\ 1 & u
	\end{pmatrix}.
\end{equation}
Therefore, by \eqref{eq:gs_n} and \eqref{eq:g}, if $n$ is sufficiently large, then for all $g\in Pw'U_{\varpi^{2n}\mathcal{O}_F}$, we see
\begin{equation*}
	\Theta_\alpha(g)(\pi(s_n)f_2)(g)=q_F^{4n\alpha}.
\end{equation*}
This shows that $\langle \lambda, \pi(s_n)f_2 \rangle \neq 0$. From the above argument, we see that $\rho^w$ is an $(H,\lambda)$-relative central exponent. Thus, we conclude that the principal series representations associated with a non unitary character $\chi$ are not $H$-relatively tempered.

\subsection{Proof of Case 2}
Since $\chi$ is unitary, it follows from the form of the Jacquet module of ${\rm Ind}_P^G(\chi\otimes\chi^{-1})$ that the relative exponent along $P$ associated with an $H$-invariant linear form is trivial. Thus ${\rm Ind}_P^G(\chi\otimes\chi^{-1})$ is not $H$-relatively square integrable.

\subsection{Proof of Case 3}
Let $\rho=\mu |\cdot|_F^{-1/2}\otimes\mu^{-1}|\cdot|_F^{1/2}$ and the representation space of ${\rm Ind}_P^G(\rho)$ be denoted by $V$. Let $W$ be $\langle \mu\circ {\rm det} \rangle_{\BC}$, which is an invariant subspace of $V$, so $V/W$ realizes the Steinberg representation. For the exact sequence of $G$-modules
\begin{equation*}
	0\to W\to V\to V/W\to 0,
\end{equation*}
by taking $H$-invariant of the dual, we have
\begin{equation*}
	0\to\Hom_H(V/W,\BC) \to \Hom_H(V,\BC)\to \Hom_H(W,\BC).
\end{equation*}
Since ${\rm det}|_H$ is surjective, we have $\Hom_H(W,\BC)=0$.\ Hence $\Hom_H(V/W,\BC) \to \Hom_H(V,\BC)$ is surjective. By the proof of Case 1, ${\rm Ind}_P^G(\rho)$ is not $H$-relatively cuspidal, and thus the Steinberg representations twisted by  nontrivial quadratic characters $\mu$ are not $H$-relatively cuspidal.

\subsection{Proof of Case 4}
Let $\rho=|\cdot|_F^{-1/2}\otimes |\cdot|_F^{1/2}$ and let the representation space of ${\rm Ind}_P^G(\rho)$ be denoted by $V$. Let $W$ be the set of all constant functions on $G$, which is an invariant subspace of $V$. Then $V/W$ realizes the Steinberg representation. Let $(\pi,V/W)$ denote the Steinberg representation.

Since $\delta_P^{1/2}\rho=1$, we have
\begin{align*}
	{\rm Ind}_P^G(\rho)&=C_c^{\infty}(P\backslash G),\\ I_c(\rho,PH)&=C_c^{\infty}(P\backslash PH),\\ I_c(\rho,G\smallsetminus PH)&=C_c^{\infty}((P\backslash G)\smallsetminus(P\backslash PH))=C_c^{\infty}(P\backslash (PqH\cup PwqH)),
\end{align*}
where $C_c^{\infty}(X)$ denotes the space of all smooth functions on $X$ with compact support. By \cite[Proposition\ 4.3.1\ p.436]{MR1431508}, we have the exact sequence
\begin{equation*}
    0\to I_c(\rho,PH) \to {\rm Ind}_P^G(\rho)\to I_c(\rho,G\smallsetminus PH) \to 0.
\end{equation*}
Note that $I_c(\rho,G\smallsetminus PH)$ decomposes as follows:
\begin{equation*}
	I_c(\rho,G\smallsetminus PH)=I_c(\rho,PqH \sqcup PwqH)=I_c(\rho,PqH)\sqcup I_c(\rho,PwqH).
\end{equation*}
Let $\xi=q$ or $wq$. Then we have
\begin{equation*}
	f(p\xi h)=f((p\xi h \xi^{-1}) \xi)=(\delta_P^{1/2}\rho)(p\xi h \xi^{-1})f(\xi) =f(\xi)
\end{equation*}
for all $p\in P,\ h\in H$ and $f\in I_c(\rho,P\xi H)$. Hence, if we define a linear form $\lambda$ on $V$ by
\begin{equation*}
	\lambda(f)=f(q)-f(wq)
\end{equation*}
for all $f\in V$, then $\lambda$ is nontrivial and $H$-invariant. Furthermore, $\lambda$ vanishes on the subspace $W$ of all constant functions. Thus, $\lambda$ can be regarded as a linear form on $V/W$. Note that the dimension of $\Hom_H(V/W,\BC)$ is at most $1$. Thus, if we show that $(\pi,V/W)$ is $(H,\lambda)$-relatively cuspidal, then we conclude that it is $H$-relatively cuspidal.

We now proceed to prove the above claim. We will show that $(\pi,V/W)$ is $(H,\lambda)$-relatively cuspidal by Theorem \ref{thm:criterion_of_relatively_cusp}. If ${\bf MH}(F)=MH$, then it follows from Lemma \ref{lem:conjugate_of_sigma-split} that all $\sigma$-split parabolic subgroups are $H$-conjugate to $P$. We see that the above assumption holds as follows. Let $A\in{\bf M}$ and $B\in{\bf H}$ be given by
\begin{equation*}
	A=
	\begin{pmatrix}
		a & \\  & b
	\end{pmatrix}
	,\ B=
	\begin{pmatrix}
		c & d \\ d & c
	\end{pmatrix}.
\end{equation*}
If
\begin{equation*}
		AB=
	\begin{pmatrix}
		ac & ad \\ bd & bc
	\end{pmatrix}
\end{equation*}
is an $F$-rational point, then $ab^{-1}\in F$. Thus, the matrix
\begin{equation*}
	\begin{pmatrix}
		ab^{-1} &  \\  & 1
	\end{pmatrix}
	AB=
	\begin{pmatrix}
		bc & bd \\ bd & bc
	\end{pmatrix}
\end{equation*}
is also an $F$-rational point. Therefore, we have
\begin{equation*}
	AB=
	\begin{pmatrix}
		ab^{-1} &  \\  & 1
	\end{pmatrix}
	\begin{pmatrix}
		bc & bd \\ bd & bc
	\end{pmatrix}
	\in MH.
\end{equation*}
Thus, by the second assertion of Theorem \ref{thm:criterion_of_relatively_cusp}, it is sufficient to show that $r_P=0$. We will calculate $r_P(\lambda)$ by Proposition \ref{prop:r_P}. Let $f\in V$. For $n\in \BN$, we define $s_n\in S_P$ by
\begin{equation*}
	s_n=\begin{pmatrix}
		\varpi_F^{n} &  \\  & \varpi_F^{-n}
	\end{pmatrix}.
\end{equation*}
We have
\begin{align*}
	qs_n &=
	\begin{pmatrix}
		1 &-1 \\ 0 & 1
	\end{pmatrix}
	\begin{pmatrix}
		2\varpi_F^n &0 \\ 0 & -\varpi_F^{-n}
	\end{pmatrix}
	\begin{pmatrix}
		1 & 0 \\ -\varpi_F^{2n} & 1
	\end{pmatrix},\\
	qws_n &=
	\begin{pmatrix}
		1 &-1 \\ 0 & 1
	\end{pmatrix}
	\begin{pmatrix}
		2\varpi_F^n &0 \\ 0 & -\varpi_F^{-n}
	\end{pmatrix}
	\begin{pmatrix}
		1 & 0 \\ \varpi_F^{2n} & 1
	\end{pmatrix}.
\end{align*}
Thus, if $n$ is sufficiently large, then we have
\begin{equation*}
	f(qs_n)=f(1)=f(qws_n).
\end{equation*}
This shows that $\langle \lambda,\pi(s_n)f \rangle=0$. Note that for every number $\varepsilon >0$, there is a sufficiently large natural number $n$ such that $s_n\in S_P^{-}(\varepsilon)$. By Proposition \ref{prop:r_P}, for such $n$, we have
\begin{equation*}
	\langle \lambda,\pi(s_n)f \rangle =\delta_P^{1/2}(s_n)\langle r_P(\lambda),\pi_P(s_n)\bar{f} \rangle.
\end{equation*}
Since the left side of the above equation vanishes, we have
\begin{equation*}
	\langle r_P(\lambda),\pi_P(s_n)\bar{f} \rangle =0.
\end{equation*}
Since the Jacquet module of the Steinberg representation is one-dimensional, this shows that
\begin{equation*}
	\langle r_P(\lambda),j_P(f) \rangle =0.
\end{equation*}
Namely $r_P=0$. Thus, we conclude that the untwisted Steinberg representation is $H$-relatively cuspidal.

\section{proof for the non $F$-split case}\label{sec:main_result_in_the_non-split}
When $H$ is non $F$-split, the proof is almost the same. In this section, let ${\bf G}=\GL_2$, and let ${\bf P}={\bf M}{\bf U}$ be a standard Borel subgroup of ${\bf G}$. Namely, we set
\begin{equation*}
	{\bf P}= \left\{
	\begin{pmatrix}
		a & b \\  & c
	\end{pmatrix}
	\bigg |\ ac\neq 0\right\},\
	{\bf M}= \left\{
	\begin{pmatrix}
		a &  \\  & b
	\end{pmatrix}
	\bigg |\ ab\neq 0\right\} \text{and}\
	{\bf U}= \left\{
	\begin{pmatrix}
		1 & u \\  & 1
	\end{pmatrix}
	\right\}.
\end{equation*}
Let $\sigma={\rm Int}(w)$, where
\begin{equation*}
	w= \begin{pmatrix}
		0 & 1 \\ \tau & 0
	   \end{pmatrix}
       \quad \tau\in F^{\times}\smallsetminus (F^{\times})^2.
\end{equation*}
Then $\bf H$ is
\begin{equation*}
	{\bf H}= \left\{
	\begin{pmatrix}
		a & b \\ \tau b & a
	\end{pmatrix}
	\bigg |\ a^2-\tau b^2\neq 0\right\}.
\end{equation*}
In this setting, similarly to Section \ref{sec:main_result_in_the_split}, we can take the $\sigma$-split torus ${\bf S}_0$, the maximal $F$-split torus ${\bf A}\coloneqq {\bf M}$ containing it, and the simple roots $\Delta=\{e_1-e_2 \}$ of $\Phi({\bf G},{\bf A})$.

We now describe the decomposition of $G$ into $(P,H)$-double cosets. The decomposition is given by
\begin{equation*}
	G=PH.
\end{equation*}
In fact, for
\begin{equation*}
	\begin{pmatrix}
		a & b \\ c & d
	\end{pmatrix}
    \in G,
\end{equation*}
we can find
\begin{equation*}
	\begin{pmatrix}
		a' & b' \\  & 1
	\end{pmatrix}
    \in P\ \text{and}\
    \begin{pmatrix}
    	x & y \\ \tau y & x
    \end{pmatrix}
    \in H,
\end{equation*}
where
\begin{equation*}
x=d,\quad y=\tau^{-1}c,\quad
	\begin{pmatrix}
		d & c \\ \tau^{-1}c & d
	\end{pmatrix}
    \begin{pmatrix}
    	a' \\ b'
    \end{pmatrix}
    =
    \begin{pmatrix}
       	a \\ b
    \end{pmatrix}.
\end{equation*}

Since $H$ contains the center of $G$ and in view of considerations in Section \ref{sec:main_result_in_the_split}, the proof of the main result reduces to establishing the following properties:
\begin{enumerate}
	\item[Case 1] Neither the trivial representation of $G$ nor the principal series representations associated with a non unitary character $\chi$ are $H$-relatively tempered.

	\item[Case 2] The principal series representations associated with a unitary character $\chi$ are not $H$-relatively square integrable.

	\item[Case 3] The Steinberg representations twisted by  quadratic characters $\mu$ are not $H$-relatively cuspidal.
\end{enumerate}

The proofs of Case 1 and Case 2 are shown in the same way as in Section \ref{sec:main_result_in_the_split}.

\subsection{Proof of Case 3}
First, we consider whether a Steinberg representation with trivial central character is $H$-distinguished. For a quadratic character $\mu$ of $F^{\times}$, let $\rho=\mu |\cdot|_F^{-1/2}\otimes\mu^{-1}|\cdot|_F^{1/2}$. Let the representation space of ${\rm Ind}_P^G(\rho)$ be denoted by $V$. Let $W$ be $\langle \mu\circ {\rm det} \rangle_{\BC}$, which is an invariant subspace of $V$. Then $V/W$ realizes the Steinberg representation. We consider the exact sequence of $H$-periods
\begin{equation}\label{eq:exact_seq_H-duals}
	0\to\Hom_H(V/W,\BC) \to \Hom_H(V,\BC)\to \Hom_H(W,\BC).
\end{equation}
Note that $\dim \Hom_H(V,\BC)=1$  by \eqref{eq:iso}. Suppose that $\mu$ corresponds to $F(\sqrt{\tau})$ by local class field theory, or is trivial. Since $\lambda$ is defined in Case 1 of Section \ref{sec:main_result_in_the_split}, the third map of the above exact sequence is nontrivial. Hence, we have $\Hom_H(V/W,\BC)=0$. Namely, in this case, the Steinberg representation is not $H$-distinguished. While, suppose that $\mu$ does not correspond to $F(\sqrt{\tau})$ by local class field theory, and is not trivial. By $\Hom_H(W,\BC)=0$ and the above exact sequence, the Steinberg representation is $H$-distinguished.

Finally, we will show that the $H$-distinguished Steinberg representations are not $H$-relatively cuspidal. By $G=PH$ and the result in Section \ref{sec:induction_from_split_parabolic_subgp}, the second map of the exact sequence \eqref{eq:exact_seq_H-duals} is surjective. Moreover, by the proof in Case 1 of Section \ref{sec:main_result_in_the_split}, ${\rm Ind}_P^G(\rho)$ is not $H$-relatively cuspidal. Thus, the Steinberg representations are not $H$-relatively cuspidal.

\bibliographystyle{amsalpha}
\bibliography{class_dist_rep_GL(2)_ref}

\providecommand{\bysame}{\leavevmode\hbox to3em{\hrulefill}\thinspace}
\providecommand{\MR}{\relax\ifhmode\unskip\space\fi MR }
\providecommand{\MRhref}[2]{%
  \href{http://www.ams.org/mathscinet-getitem?mr=#1}{#2}
}
\providecommand{\href}[2]{#2}
\begin{thebibliography}{Bum97}

\bibitem[Bum97]{MR1431508}
Daniel Bump, \emph{Automorphic forms and representations}, Cambridge Studies in
  Advanced Mathematics, vol.~55, Cambridge University Press, Cambridge, 1997.
  \MR{1431508}

\bibitem[Gan19]{MR3930015}
Wee~Teck Gan, \emph{Periods and theta correspondence}, Representations of
  reductive groups, Proc. Sympos. Pure Math., vol. 101, Amer. Math. Soc.,
  Providence, RI, 2019, pp.~113--132. \MR{3930015}

\bibitem[JR96]{MR1394521}
Herv\'e{} Jacquet and Stephen Rallis, \emph{Uniqueness of linear periods},
  Compositio Math. \textbf{102} (1996), no.~1, 65--123. \MR{1394521}

\bibitem[KT08]{MR2428854}
Shin-ichi Kato and Keiji Takano, \emph{Subrepresentation theorem for {$p$}-adic
  symmetric spaces}, Int. Math. Res. Not. IMRN (2008), no.~11, Art. ID rnn028,
  40. \MR{2428854}

\bibitem[KT10]{MR2566307}
\bysame, \emph{Square integrability of representations on {$p$}-adic symmetric
  spaces}, J. Funct. Anal. \textbf{258} (2010), no.~5, 1427--1451. \MR{2566307}

\bibitem[KT20]{MR4121785}
\bysame, \emph{On some relatively cuspidal representations: cases of {G}alois
  and inner involutions on {${\rm GL}_n$}}, Osaka J. Math. \textbf{57} (2020),
  no.~3, 711--736. \MR{4121785}

\bibitem[KT21]{MR4342358}
\bysame, \emph{Relative non-cuspidality of representations induced from split
  parabolic subgroups}, Tokyo J. Math. \textbf{44} (2021), no.~1, 25--32.
  \MR{4342358}

\bibitem[Tak00]{MR1792733}
Keiji Takano, \emph{Spherical functions in a certain distinguished model}, J.
  Math. Sci. Univ. Tokyo \textbf{7} (2000), no.~3, 369--400. \MR{1792733}

\bibitem[Tak19]{MR3889766}
Shuichiro Takeda, \emph{On relatively tempered representations for {$p$}-adic
  symmetric spaces}, Representations of reductive {$p$}-adic groups, Progr.
  Math., vol. 328, Birkh\"auser/Springer, Singapore, 2019, pp.~263--289.
  \MR{3889766}

\bibitem[Vus74]{Vust1974OprationDG}
Thierry Vust, \emph{Op{\'e}ration de groupes r{\'e}ductifs dans un type de
  c{\^o}nes presque homog{\`e}nes}, Bulletin de la Soci{\'e}t{\'e}
  Math{\'e}matique de France \textbf{102} (1974), 317--333.

\end{thebibliography}
\nocite{*}
\end{document}